\documentclass[onecolumn]{IEEEtran}

% *** MATH PACKAGES ***
%
\usepackage[cmex10]{amsmath}
\usepackage{latexsym}
% A popular package from the American Mathematical Society that provides
% many useful and powerful commands for dealing with mathematics. If using
% it, be sure to load this package with the cmex10 option to ensure that
% only type 1 fonts will utilized at all point sizes. Without this option,
% it is possible that some math symbols, particularly those within
% footnotes, will be rendered in bitmap form which will result in a
% document that can not be IEEE Xplore compliant!
%
% Also, note that the amsmath package sets \interdisplaylinepenalty to 10000
% thus preventing page breaks from occurring within multiline equations. Use:
\interdisplaylinepenalty=2500
% after loading amsmath to restore such page breaks as IEEEtran.cls normally
% does. amsmath.sty is already installed on most LaTeX systems. The latest
% version and documentation can be obtained at:
% http://www.ctan.org/tex-archive/macros/latex/required/amslatex/math/

%
% If IEEEtran.cls has not been installed into the LaTeX system files,
% manually specify the path to it like:
% \documentclass[journal]{../sty/IEEEtran}

\newtheorem{theorem}{Theorem}
\newtheorem{proposition}{Proposition}

\newtheorem{lemma}{Lemma}

\newtheorem{definition}{Definition}

\newtheorem{claim}{Claim}

\newtheorem{remark}{Remark}

%\numberwithin{theorem}{section} \numberwithin{equation}{section}
%\numberwithin{proposition}{section} \numberwithin{lemma}{section}
%\numberwithin{corollary}{section}

\newcommand{\mathbb}[1]{{\bf #1}}

%%%%%%%%%%%%%%%%%%%%%%%%%%%%%%%%%%%%%%%%%%%%%%%%%%%%%%%%%%%%%%%%%%%%%%%%%%%%%%%

% References to Theorems, Lemmas, etc.

\newcommand{\thmref}[1]{Theorem~\ref{thm:#1}} % Theorem tag equals ``thm''
\newcommand{\lemref}[1]{Lemma~\ref{lem:#1}} % Lemma tag equals ``lem''
\newcommand{\propref}[1]{Proposition~\ref{prop:#1}} % Proposition tag equals ``prop''
\newcommand{\remref}[1]{Remark~\ref{rem:#1}} % Remark equals ``rem''
 % Claim tag equals ``claim''
 % Corollary = ``cor''
 % Assumption = ``assum''
\newcommand{\defref}[1]{Definition~\ref{def:#1}} % Definition= ``def''
\newcommand{\secref}[1]{Section~\ref{sec:#1}} % Section = ``sec''
 % Section = ``sec''
\newcommand{\eqnref}[1]{(\ref{eq:#1})} % Equation = ``eq''
 % Figure = "fig"
 % chapter = "chap"
 % question = "question"
 % question = "question"

\def\be{\begin{equation} }
\def\ee{ \end{equation}}

\def\ben{\begin{equation*}}
\def\een{\end{equation*}}
\def\bea{\begin{eqnarray}}
\def\eea{\end{eqnarray}}
\def\ee{\end{eqnarray}}
\def\bean{\begin{eqnarray*}}
\def\eean{\end{eqnarray*}}

%%%%%%%%%%%%%%%%%%%%%%%%%%%%%%%%%%%%%%%%%%%%%%%%%%%%%%%%%%%%%%%%%%%%%%%%%%%%%%%

% The incredibly useful command "ignore"

\newcommand\ignore[1]{}

%%%%%%%%%%%%%%%%%%%%%%%%%%%%%%%%%%%%%%%%%%%%%%%%%%%%%%%%%%%%%%%%%%%%%%%%%%%%%%%

% \mypar = \sc, then new paragraph

%%%%%%%%%%%%%%%%%%%%%%%%%%%%%%%%%%%%%%%%%%%%%%%%%%%%%%%%%%%%%%%%%%%%%%%%%%%%%%%
% Defining a function
%
% Usage: \funcdef{function}{domain}{counter-domain}{maps}{mapsto}

% Standard math

\def\R{\mathbb{R}} % real numbers
 % complex numbers
 % rationals
\def\Z{\mathbb{Z}} % integers
\def\N{\mathbb{N}} % naturals
  % Square root of -1

%%%%%%%%%%%%%%%%%%%%%%%%%%%%%%%%%%%%%%%%%%%%%%%%%%%%%%%%%%%%%%%%%%%%%%%%%%%%%%%

% Norms

%%%%%%%%%%%%%%%%%%%%%%%%%%%%%%%%%%%%%%%%%%%%%%%%%%%%%%%%%%%%%%%%%%%%%%%%%%%%%%%

% Punctuation within the math environment

%%%%%%%%%%%%%%%%%%%%%%%%%%%%%%%%%%%%%%%%%%%%%%%%%%%%%%%%%%%%%%%%%%%%%%%%%%%%%%%

% Probability commands

\newcommand{\Ex}[1]{\mathbb{E}\left[#1\right]} % \Ex{abc} prints E[abc] with appropriately sized bracketss
 % \Ex{abc} prints E[abc] with appropriately sized bracketss

\newcommand{\Prp}[2]{\mathbb{P}_{#1}\left(#2\right)} % \Prp{a}{b} yields P_a(b); a is a subscript, e.g. the initial state in a Markov Process
 % \Prp{a}{b} yields P_a(b); a is a subscript, e.g. the initial state in a Markov Process

\newcommand{\Prpwo}[1]{\mathbb{P}_{#1}} % \Prp{a}{b} yields P_a(b); a is a subscript, e.g. the initial state in a Markov Process

 % \Exp{a}{b} yields E_a[b]; a is a subscript, e.g. the initial state in a Markov Process
\newcommand{\Ind}[1]{{\rm {\bf 1}}_{\{#1\}}} % Indicator of event #1
 % Variance with appropriately sized parentheses
 % Varp{a}{b} yields Var_a(b); a is a subscript
   % equality in distribution
 % Weak convergence
\renewcommand{\Pr}[1]{\mathbb{P}\left(#1\right)} % \Pr{abc} prints P(abc) with appropriately sized parentheses
%\newcommand{\sd}[2]{\left\|#1 - #2\right\|_{sd}} % statistical distance
%\newcommand{\ent}[1]{\mbox{H}\left(#1\right)} % entropy
%\newcommand{\minent}[1]{\mbox{H}_{\infty}\left(#1\right)} % min-entropy
%\newcommand{\renent2}[1]{\mbox{H}_{2}\left(#1\right)}     % second Renyi entropy

%%%%%%%%%%%%%%%%%%%%%%%%%%%%%%%%%%%%%%%%%%%%%%%%%%%%%%%%%%%%%%%%%%%%%%%%%%%%%%%

% Quantum notation

% bras, kets, etc

% Pauli matrices

% Other gates

 % controlled-not
 % Hadamard

% Powers of gates

%%%%%%%%%%%%%%%%%%%%%%%%%%%%%%%%%%%%%%%%%%%%%%%%%%%%%%%%%%%%%%%%%%%%%%%%%%%%%%%

% Big oh, little oh, Omega, Theta

\newcommand{\bigoh}[1]{O\left(#1\right)}

%%%%%%%%%%%%%%%%%%%%%%%%%%%%%%%%%%%%%%%%%%%%%%%%%%%%%%%%%%%%%%%%%%%%%%%%%%%%%%%

% Script capital letters

\def\sF{\mathcal{F}}

\def\sR{\mathcal{R}}
\def\sS{\mathcal{S}}\def\sT{\mathcal{T}}

% Graph notation

 % vertex set
 % edge set

% Proof environment

   % jab defined
\newcommand\QED{\ifhmode\allowbreak\else\nobreak\fi
\quad\nobreak$\Box$\medbreak}
\newcommand{\proofstart}{\par\noindent\sl Proof:\rm\enspace}
\newcommand{\proofend}{\QED\par}

% Greek letters

\def\eps{\epsilon}
%\def\s{\sigma}

% Other symbols

% Temporary commands to spot flaws

%%%%%%%%%%%%%%%%%%%%%

%\usepackage{arxiv_math,latexsym}

%%%%%%%%%%%%%%%%%%%%%%%%%%%%%%%%%%%%%%%%%%%%%%%%%%%%%%%%%%%%%%%%%%%%%%%%%%%%%
% Commands specific to the thesis

\def\bX{{\bf X}}
\def\conf{\widehat{{\sf cf}}}
\def\barconf{\overline{{\sf cf}}}

\def\hatp{\widehat{p}_n}

\def\hath{\widehat{h}_n}
\def\hatP{\widehat{P}_n}

\def\barp{\overline{p}_n}
\def\hatpmin{\widehat{p}_{n,\delta,-}}
\def\hatPmin{\widehat{P}_{n,\delta,-}}
\def\hatF{\widehat{F}_{n,\delta}}

\def\conf{\widehat{\sf cf}_{n,\delta}}
\def\Good{{\bf {\sf Good}}}

\def\supp{{\rm supp}}
\def\cont{{\rm ct}}

% Some very useful LaTeX packages include:
% (uncomment the ones you want to load)

% *** MISC UTILITY PACKAGES ***
%
%\usepackage{ifpdf}
% Heiko Oberdiek's ifpdf.sty is very useful if you need conditional
% compilation based on whether the output is pdf or dvi.
% usage:
% \ifpdf
%   % pdf code
% \else
%   % dvi code
% \fi
% The latest version of ifpdf.sty can be obtained from:
% http://www.ctan.org/tex-archive/macros/latex/contrib/oberdiek/
% Also, note that IEEEtran.cls V1.7 and later provides a builtin
% \ifCLASSINFOpdf conditional that works the same way.
% When switching from latex to pdflatex and vice-versa, the compiler may
% have to be run twice to clear warning/error messages.

% *** CITATION PACKAGES ***
%
\usepackage{cite}
% cite.sty was written by Donald Arseneau
% V1.6 and later of IEEEtran pre-defines the format of the cite.sty package
% \cite{} output to follow that of IEEE. Loading the cite package will
% result in citation numbers being automatically sorted and properly
% "compressed/ranged". e.g., [1], [9], [2], [7], [5], [6] without using
% cite.sty will become [1], [2], [5]--[7], [9] using cite.sty. cite.sty's
% \cite will automatically add leading space, if needed. Use cite.sty's
% noadjust option (cite.sty V3.8 and later) if you want to turn this off
% such as if a citation ever needs to be enclosed in parenthesis.
% cite.sty is already installed on most LaTeX systems. Be sure and use
% version 4.0 (2003-05-27) and later if using hyperref.sty. cite.sty does
% not currently provide for hyperlinked citations.
% The latest version can be obtained at:
% http://www.ctan.org/tex-archive/macros/latex/contrib/cite/
% The documentation is contained in the cite.sty file itself.

% *** GRAPHICS RELATED PACKAGES ***
%
\ifCLASSINFOpdf
  % \usepackage[pdftex]{graphicx}
  % declare the path(s) where your graphic files are
  % \graphicspath{{../pdf/}{../jpeg/}}
  % and their extensions so you won't have to specify these with
  % every instance of \includegraphics
  % \DeclareGraphicsExtensions{.pdf,.jpeg,.png}
\else
  % or other class option (dvipsone, dvipdf, if not using dvips). graphicx
  % will default to the driver specified in the system graphics.cfg if no
  % driver is specified.
  % \usepackage[dvips]{graphicx}
  % declare the path(s) where your graphic files are
  % \graphicspath{{../eps/}}
  % and their extensions so you won't have to specify these with
  % every instance of \includegraphics
  % \DeclareGraphicsExtensions{.eps}
\fi
\hyphenation{op-tical net-works semi-conduc-tor}

\begin{document}
%
% paper title
% can use linebreaks \\ within to get better formatting as desired
% Do not put math or special symbols in the title.
\title{Stochastic Processes with Random Contexts: a Characterization, and Adaptive Estimators for the Transition Probabilities}
%
%
% author names and IEEE memberships
% note positions of commas and nonbreaking spaces ( ~ ) LaTeX will not break
% a structure at a ~ so this keeps an author's name from being broken across
% two lines.
% use \thanks{} to gain access to the first footnote area
% a separate \thanks must be used for each paragraph as LaTeX2e's \thanks
% was not built to handle multiple paragraphs
%

\author{Roberto Imbuzeiro Oliveira% <-this % stops a space
\thanks{R. I. Oliveira is with Instituto Nacional de Matem\'{a}tica Pura e Aplicada (IMPA) in Rio de Janeiro, 22460-320, Brazil. This work was supported by a {\em Bolsa de Produtividade em Pesquisa} from CNPq, Brazil and by the FAPESP Center for Neuromathematics, grant \# 2013/ 07699-0 from S. Paulo Research Foundation (email: rimfo@impa.br). Copyright (c) 2014 IEEE. Personal use of this material is permitted.  However, permission to use this material for any other purposes must be obtained from the IEEE by sending a request to pubs-permissions@ieee.org.}% <-this % stops a space
}

\maketitle

% As a general rule, do not put math, special symbols or citations
% in the abstract or keywords.
\begin{abstract}
This paper introduces the concept of random context representations for the transition probabilities of a finite- alphabet stochastic process. Processes with these representations generalize context tree processes (a.k.a. variable length Markov chains), and are proven to coincide with processes whose transition probabilities are almost surely continuous functions of the (infinite) past. This is similar to a classical result by Kalikow about continuous transition probabilities. Existence and uniqueness of a minimal random context representation are shown, in the sense that there exists a unique representation that ``looks into the past as little as possible"~in order to determine the next symbol. Both this representation and the transition probabilities can be consistently estimated from data, and some finite sample adaptivity properties are also obtained (including an oracle inequality). In particular, the estimator achieves minimax performance, up to logarithmic factors, for the class of binary renewal processes whose arrival distributions have bounded moments of order $2+\gamma$.\end{abstract}

% Note that keywords are not normally used for peerreview papers.
\begin{IEEEkeywords}
Stochastic processes, estimation, statistics. 
\end{IEEEkeywords}

% For peer review papers, you can put extra information on the cover
% page as needed:
% \ifCLASSOPTIONpeerreview
% \begin{center} \bfseries EDICS Category: 3-BBND \end{center}
% \fi
%
% For peerreview papers, this IEEEtran command inserts a page break and
% creates the second title. It will be ignored for other modes.
\IEEEpeerreviewmaketitle

\section{Introduction}
\IEEEPARstart{C}{onsider} a stationary stochastic process $\bX:= (X_n)_{n\in\Z}$ where each $X_n$ takes values in a finite alphabet $A$. Generally speaking, this process has infinite memory in the sense that its transition probabilities,
\[p(a|x^{-1}_{-\infty}):= \Pr{X_0=a|X^{-1}_{-\infty}=x^{-1}_{-\infty}},\]
depend on the whole infinite past
$x^{-1}_{-\infty}=(\dots,x_{-3},x_{-2},x_{-1})\in A^{\Z_-}$, as well as on the next symbol $a\in A$. This is in contrast with Markov chains, where the transition probabilities depend only on the $k$ last steps of the process, for some fixed $k$ called the {\em order} of the chain.

Markov chains are simple and concrete processes that are amenable to analysis and estimation. By contrast, processes with infinite memory are both natural and important, in theory as well as in practice. This motivates the study of families of processes that, while potentially having infinite memory, retain some characteristics of Markov chains. Two examples stand out in the literature.

\begin{enumerate}
\item {\em Random Markov chains:} for these processes, the distribution of $X_0$ given the past is determined by first sampling a random variable $K_0\in\N$ independently of $X^{-1}_{-\infty}$ and then setting \[\Pr{X_0=a|X_{-\infty}^{-1}=x^{-1}_{-\infty},K_0=k} = p_k(a|x^{-1}_{-k}),\] where $p_k$ is a pre-specified transition kernel of order $k$. In other words, the transition probabilities are {\em mixtures} of Markov transitions with different orders. Kalikow \cite{kalikow:randommarkov} introduced these processes and showed that $\bX$ is a random Markov chain if and only if its transition probabilities depend continuously on the past (cf. \remref{CFF1} below). Kalikow also noted that any continuous specification of transition probabilities corresponds to at least one process $\bX$. Such processes continue to attract much interest under the heading of ``g-measures" \cite{GalloTakahashi_Uniqueness,BramsonKalikow_NonUniqueness,JohanssonOberg_Uniqueness,Vladas_NonUniqueness}. 
\item {\em Variable length Markov chains, or context tree processes:} this is similar to the above, except that this time $K_0$ is a almost-surely finite stopping time for the filtration $\{\sigma(X^{-1}_{-k})\}_{k\geq 1}.$ That is to say, for any $k\in\N$, whether or not $K_0=k$ depends on $X^{-1}_{-k}$ but not on anything else. There is no simple characterization of context tree processes, but several interesting classes such as renewal and Markov renewal processes are of this form. A key feature of these models, which motivated Rissanen \cite{Rissanen1983} to define them, is that a VLMC model can be much more succinct than a standard order $k$ Markov model: it allows for large order parameters for ``difficult"~pasts and smaller parameters for easier pasts. This built-in sparsity property also comes with a natural tree structure that is advantageous for algorithms. Estimators such as Rissanen's original Context algorithm and BIC-penalized maximum likelihood have been introduced and analyzed  \cite{BuhlmannWyner1999,Garivier2006,CsiszarTalata2006,TalataDuncan2009,LeonardiGarivier2011,GarivierLerasle2011}.  VLMCs have been applied in Data Compression \cite{Rissanen1983,WillemsEtAl1995}, Genomics \cite{Patcher2011,BuhlmannWyner1999,Bejerano2004} and Linguistics \cite{GalvesGarcia2012}, among other fields.  
\end{enumerate}

\subsection{Our contribution}
In this paper we initiate the study of a common generalization of these two classes of processes. As we will see, this new class is mathematically natural and has good estimators. Moreover, the class of processes described by this generalization is extremely broad: in fact, there seem to be no known examples of ergodic processes outside of this class (cf. the final section of \cite{GalloPaccaut2013}). 

The new class is called {\em random context processes}. We retain the basic idea that the transition probability depends on the $K_0$ last symbols, but require that $K_0$ be a {\em randomized stopping time}. That is, we require that, almost surely, $K_0<+\infty$ and for all $k\in\N$,
\[\Pr{K_0\leq k\mid X^{-1}_{-\infty}} =  \Pr{K_0\leq k\mid X^{-1}_{-k}}.\]
A more formal definition will be given in \secref{definitions}. {\em Modulo} some technicalities, this new definition corresponds to taking mixtures of context tree representations (cf. \remref{mixtures} below), and is a natural hybrid between random Markov chains and context tree processes. Randomized stopping times have appeared before in the literature on Markov chains \cite{AldousDiaconis_SST}. 

We prove several results about this class of processes.

\paragraph{Structural results} We prove that a stationary stochastic process $\bX$ is a random context process if and only if its transition probabilities are almost surely continuous functions of the infinite past. We also show that there exists a well-defined {\em minimal random context representation}, whose $K_0$ is stochastically dominated by any other choice of randomized stopping time that is compatible with $\bX$. In this sense, the minimal representation ``looks as little as possible into the past"~in order to determine the next symbol. In order to prove these results, a crucial idea is to introduce certain minorants of the transition probabilities, which are defined in \secref{definitions} and used throughout the paper.

\paragraph{Strongly consistent estimation} we construct an estimator for the transition probabilities and for the distribution of the minimal~$K_0$. The estimator tries to mimic the minimal random context representation by estimating the minorants mentioned above. The intuition behind it is that mimicking minimality leads to succinct estimators and should translate into favorable statistical properties. In fact, we will show that our estimates are strongly consistent if $\bX$ is ergodic.

\paragraph{Finite samples: adaptivity and oracle inequalities} We also prove a finite sample adaptive property of our estimator. Considering the minimal random context representation of the process $\bX$, we show that there exists a positive $C>0$ such that, with high probability, simultaneously for all infinite pasts $x$, the total variation distance $d_{\rm TV}(p(\cdot|x),\hat{P}_n(\cdot|x))$ between real and estimated transition probabilities is bounded by \[\inf_{L\in\N}\left\{\Pr{K_0>L\mid X^{-1}_{-\infty}=x} +C\,\sqrt{\frac{|A|\ln n}{N_{n-1}(x^{-1}_{-L})}}\right\},\]where $N_{n-1}(x_1^L)$ counts the number of occurrences of $x_1^L$ in $X_1^{n-1}$ (we let the quantity in curly brackets be $+\infty$ when $N_{n-1}(x^{-1}_{-L})=0$). This follows from \thmref{oracle} below (cf. the discussion in subsection \ref{sub:bound}). This result can be viewed as an oracle inequality where the reference class of estimates consists of ``truncated"~versions of the transition probabilities; moreover, the non-standard penalty term above, which depends on the empirical quantity $N_{n-1}(x^{-1}_{-L})$, is advantageous in applications. These and other issues are discussed in Subsection \ref{sub:oracle} below.

\paragraph{Near minimax optimality for renewal processes} As an illustration of our adaptivity results, we show that our estimator for the transition probabilities is minimax optimal (up to a $\sqrt{\ln n}$ term) for the class of binary renewal processes with bounded $2+\gamma$ moments ($\gamma>0$), with a natural loss function. This is the content of \thmref{minimax} below. Binary renewal processes are a natural and popular non-parametric family of context tree sources where we expect adaptivity to different pasts to be useful, as the optimal representation of a renewal process is very far from a complete binary tree. This intuition is confirmed by \thmref{minimax}. Quite surprisingly, we could not find any results similar to this one in the literature. 

\subsection{Comparison with previous results}

Our structural results are related to the large body of work on random Markov chains, also known as $g$-measures. {\em Existence} of a process with given continuous transition probabilities is easy to show \cite{kalikow:randommarkov}, and most papers have centered on {\em uniqueness} problems: given $p$, how many compatible $\bX$ are there? See e.g.  \cite{GalloTakahashi_Uniqueness,BramsonKalikow_NonUniqueness,JohanssonOberg_Uniqueness,Vladas_NonUniqueness} for some significant results. When $p$ is allowed to be discontinuous, as in this paper, even the existence problem can be fairly delicate: see \cite{GalloGarcia2013,GalloPaccaut2013} for some recent results and open questions, including whether there exist processes that are not a.s. continuous. In any case, we do {\em not} address existence and uniqueness problems here; all our results start from a pre-existing process. 

On the estimation side, there is a vast literature on the problem of estimating context tree models\cite{BuhlmannWyner1999,Garivier2006,CsiszarTalata2006,TalataDuncan2009,LeonardiGarivier2011,GarivierLerasle2011}. The vast majority of those papers considers model section via Rissanen's original Context algorithm or the BIC criterion, both of  which rely on the Kullback Leiber distance (see \cite{LeonardiGarivier2011} for a unified analysis of the two estimators). Using the KL divergence is appealing from a coding-theoretic or MDL perspective, but seems to require fairly strong assumptions on the process, such as continuity of the transition probabilities, strong mixing properties and/or that all transition probabilities are lower bounded by a positive constant. These assumptions are violated in many important settings, including that of binary renewal processes which we consider in \secref{minimax}. Our framework circumvents the KL distance entirely and avoids such assumptions while obtaining strong results. In particular, \thmref{minimax} on near minimax estimation for renewal processes seems to be the first result of its kind.  

In terms of techniques, we employ a tool of potential independent interest: an ``economical"~form of Freedman's inequality for martingales: see \lemref{freedman} for details and \remref{economy} for comments on why the ``economy"~is crucial for our application.   

\subsection{Potential applications}
Although our focus is on basic theoretical problems, we believe that random context representations might be useful in applied settings where context trees have been successful. For concreteness, we discuss two such examples. In {\em RNA sequencing}, RNA molecules are modeled by context tree processes, and the inferred trees and probabilities are used to cluster proteins into families \cite{Bejerano2004,Patcher2011,BuhlmannWyner1999}. In {\em Linguistics}, context tree models have been used to ascertain the existence of different rhythmic patterns in European and Brazilian Portuguese \cite{GalvesGarcia2012}. 

Our approach offers two potential advantages in both settings. Firstly, our transition probability estimates come with explicit oracle-type guarantees for finite samples. Secondly, a random context representation contains more information than context trees. To explain this, we note that a context length of $k$ for a certain past means that there are no dependencies in the transition probabilities that reach more than $k$ symbols into the past. This is a somewhat brittle notion of memory, in that even a minor degree of dependence might imply a large context length. By contrast, the (estimated) distribution of the $K_0$ in a random context distribution measures the {\em strength} of these dependencies. Overall, it seems that applications of RCRs, as well as the attendant theory, deserve further study. 

\subsection{Organization}

The remainder of the paper is organized as follows. \secref{notation} introduces our notation. \secref{definitions} presents the main definitions of almost sure continuity and random context representations, together with another definition that bridges the two. In \secref{kalikov.iff} we prove the structural results on a.s. continuity and (minimal) random context representations. \secref{estimation} introduces our estimator for the transition probabilities. Strong consistency and adaptivity results are discussed in \secref{strong} and \secref{oracle}, respectively. Binary renewal processes and our nearly minimax bound for them are presented in \secref{minimax}. The appendix contains some technical results that are used in the main text.

\section{Notation}\label{sec:notation}

In this paper the {\em alphabet} is a non-empty finite set $A$. $\N=\{0,1,2,\dots,\}$ is the set of natural numbers, $\Z$ is the set of integers, and $\Z_-$, $\Z_+$ are the sets of negative and positive integers, respectively ($0$ does not belong to $\Z_+$ or $\Z_-$).

$d_{\rm TV}$ denotes the total variation metric over probability measures over the same finite or countable set $S$:
\begin{eqnarray}\label{eq:defTV}d_{\rm TV}(p,q) &:= &\frac{1}{2}\sum_{a\in S} |p(a)-q(a)|\\ \nonumber &=&\sum_{a\in S}(p(a)-q(a))_+ \\ \nonumber &=& \sum_{a\in S}(q(a)-p(a))_+.\end{eqnarray}
Note that the first equality in the display is a definition, whereas the second and third are theorems that we will use repeatedly. The set $S$ will be implicit in most cases.

$\bX:= (X_n)_{n\in\Z}\in A^{\Z}$ will always denote a stationary stochastic process with alphabet $A$. We say $\bX$ is {\em binary} if $A=\{0,1\}$.  $A^*$ is the set of all finite strings over $A$ (including the empty string $o$). The length of $w\in A^*$ is denoted by $|w|$. For $w\in A^*\backslash\{o\}$ and $1\leq i\leq |w|$, $w_i$ is the $i$th character in $i$, and if $i\leq j\leq |w|$, $w_i^j$ is the substring consisting of symbols from $w_i$ to $w_j$. We also set $w^j_i=o$ when $j<i$. 

For $w\in A^*$ and $a\in A$, $wa$ is the string obtained by concatenating $w$ and $a$; that is, $wa$ has length $|w|+1$, $(wa)_{i}=w_i$ for $1\leq i\leq |w|$ and $(wa)_{|w|+1}=a$.

We define a partial order on $u,w\in A^*$ $w\preceq u$ if $w$ is a suffix of $u$, that is $|w|\leq |u|$ and $u^{|u|}_{|u|-|w|+1}=w$. Note that $o\preceq w$ always.

$A^{\Z_-}$ is the set of {\em infinite pasts}, i.e. of sequences $x=(\dots,x_{-3},x_{-2},x_{-1})$ of elements of $A$, indexed by the negative integers. Given $x\in A^{\Z_-}$ and $i,j\in \Z_-$, we define $x_i$ and $x^{j}_{i}$ similarly to above. We will often write $x^{-1}_{-\infty}$ instead of $x$ to emphasize that $x$ is an infinite past. Finally, we use $\Prpwo{past}$
to denote the measure that $X^{-1}_{-\infty}$ induces over $A^{\Z_-}$. 

We write
\[\supp^*(\bX):=\{w\in A^*\,:\,\Pr{X^{-1}_{-|w|}=w}>0\}.\]
For $a\in A$ and $w\in \supp^*(\bX)$, we define\[ p(a|w):=\Pr{X_0=a\mid X^{-1}_{-|w|}=w}.\]
We define similarly $p(a|x^{-1}_{-\infty})=\Pr{X_0=a\mid X^{-1}_{-\infty}=x^{-1}_{-\infty}}$ for $x^{-1}_{-\infty}\in A^{\Z_-}$, in the usual measure theoretic sense of conditional probabilities. We will often use the fact that $p(a|X^{-1}_{-\infty}) = \lim_{k\to +\infty}p(a|X^{-1}_{-k})$ almost surely, which is a consequence of the Martingale Convergence Theorem. 

One final convention: $\alpha/0=+\infty$ for any positive real number $\alpha$. 

\section{Definitions}\label{sec:definitions}
In this section we compile three notions that will be fundamental in all that follows, and make some simple comments on them. 

\subsection{Almost sure continuity} Given a stationary process $\bX$ and a finite string $w\in\supp^*(\bX)$, we denote the {\em continuity rate at $w$} as follows. 
\[\cont(w):=\sup\limits_{w'\in \supp^*(\bX)\,:\,w'\succeq w}d_{\rm TV}(p(\cdot|w'),p(\cdot|w)).\]
\begin{definition}We say that $\bX$ is almost surely (a.s.)  continuous if $\cont(X^{-1}_{-k})\to 0$ as $k\to +\infty$ for almost every realization of the process. \end{definition}
\begin{remark}[Relationship to context tree processes]\label{rem:context} It may be shown that a context tree process is precisely a process such that:
$\Pr{\cont(X^{-1}_{k})=0\mbox{ for some $k\in\N$}}=1.$
In this case one can show that taking $L(x)=\inf\{k\in\N\,:\, \cont(x^{-1}_{-k})=0\}$ is a choice of stopping time achieving the context tree property: that is, 
\[\mbox{ for }\Prpwo{past}\mbox{-a.e. }x\in A^{\Z_-},\forall a\in A\,:\,p(a|x) = p(a|x^{-1}_{-L(x)}).\]\end{remark}
\begin{remark}[Relationship to continuity and mixing properties]\label{rem:CFF1} Define 
\[\cont(k):=\sup\limits_{w\in\supp^*(\bX)\,:\,|w|=k}\cont(w)\,\,(k\in\N).\]
We say $\bX$ is {\em continuous} if $\cont(k)\to 0$ as $k\to \infty$. If is known that, under certain assumptions, continuity has may have consequences for the mixing properties of the process. For instance, Comets et al. \cite{CometsFernandezFerrari2002} have shown that, if $\sum_a \inf_w p(a|w)\geq \delta$ and $\cont(k)\leq C\,k^{-(1+\alpha)}$ for constants $C,\alpha,\delta>0$, then the process $\bX$ is $\phi$-mixing with polynomial rate $\approx k^{-\alpha}$. On the other hand, there seem to be no known examples of processes that are not a.s. continuous \cite{GalloPaccaut2013}, so ``$\cont(X^{-1}_{-k})\to 0$ almost surely" (and non-uniformly) is compatible with many different kinds of mixing hypotheses.\end{remark}

\section{Random context representations}
Our next definition formalizes our discussion in the Introduction. \begin{definition}\label{def:rcr}A stationary, $A$-valued process $\bX=(X_n)_{n\in\Z}$ has a random context representation if there is a stationary process $(Y_n,K_n)_{n\in\N}$, with $(Y_n,K_n)\in A\times (\N\cup\{+\infty\})$ for all $n\in\Z$, with the following properties: {\bf (i)} $(Y_n)_{n\in\Z}$ and $(X_n)_{n\in \Z}$ have the same distribution; {\bf (ii)} $K_0<+\infty$ almost surely; and {\bf (iii)} almost surely, for all $a\in A$, $k\in\N$:
\[\Pr{Y_0=a,K_0\leq k\mid Y^{-1}_{-\infty}} =  \Pr{Y_0=a,K_0\leq k\mid Y^{-1}_{-k}}.\]

The process $(Y_n,K_n)_{n\in\Z}$ is called a {\em random context representation (RCR)} of $(X_n)_{n\in\Z}$. A process with a RCR is called a {\em random context process}.\end{definition}

\begin{remark}Notice that whether or not $\bX$ has a random context representation is determined entirely by the distribution of the process. Therefore, whenever we discuss RCRs we will assume that $Y_n=X_n$ for all $n\in\N$. We may do this even when we discuss two or more representations for the same process (i.e. the two representations may be defined in the same probability space). \end{remark}

\begin{remark}[Relationship with other classes of processes] It follows from this definition that:
\[\Pr{K=k\mid X^{-1}_{-\infty}=x^{-1}_{-\infty}}=\lambda_k(x^{-1}_{-k}),\] and \[\Pr{X_0=a\mid K=k,X^{-1}_{-\infty}=x^{-1}_{-\infty}} = p_k(a|x^{-1}_{-k})\]
almost surely, where $\lambda_k:A^k\to [0,1]$ and $p_k:A\times A^k\to [0,1]$ is an order $k$ Markov transition kernel. The fact that $K_0<+\infty$ almost surely corresponds to:
\[\sum_{k=0}^{\infty}\lambda_k(x^{-1}_{-k})=1\mbox{ for $\Prpwo{past}$-a.e. } x^{-1}_{-\infty}\in A^{\Z_-}.\]
Random Markov chains correspond to having $\lambda_k$ constant (i.e.. independent of $X^{-1}_{-k}$). Context tree processes correspond to the particular case where each $\lambda_k(x^{-1}_{-k})\in \{0,1\}$ almost surely, because this corresponds to saying that $\Ind{K_0=k}$ is entirely determined by $X^{-1}_{-k}$. Thus random context processes generalize these two previous classes.\end{remark}

\begin{remark}[Mixtures of context tree transitions]\label{rem:mixtures} Using the notation of the previous remark, we define, for $x\in A^{\Z_-}$ and $\theta\in (0,1)$:
\[L_\theta(x):=\inf\left\{k\in\N\,:\, \theta<\sum_{i=0}^k\lambda_i(x^{-1}_{-i})\right\}.\]
If the inf is not well defined, we set $L_{\theta}(x)=+\infty$. By the previous remark, $L_\theta(X^{-1}_{-\infty})$
is an a.s. finite stopping time for the filtration $\{\sigma(X^{-1}_{-k})\}_{k\in\N}$. A simple calculation shows that
\begin{IEEEeqnarray*}{rCl}\Pr{X_0=a\mid X^{-1}_{-\infty}=x^{-1}_{-\infty}} &=& \sum_{k\in\N}\lambda_k(x^{-1}_{-k})\,p_k(a|x^{-1}_{-k})\\  &=& \int_{0}^1\,p_{L_\theta(x)}(a|x^{-1}_{-L_\theta(x)})\,d\theta.\end{IEEEeqnarray*}
In other words, $p(\cdot|X^{-1}_{-\infty})$ can be written as a mixture of transition probabilities which are consistent with the stopping times $L_\theta$. This is essentially a mixture of the transition probabilities of context trees.\end{remark}

\subsection{Minorants for the transition probabilities} It turns out that a process is a.s. continuous if and only if it is a random context process (cf. \thmref{kalikov.iff}). The main bridge between the two concepts will be the following definition: for $(a,w)\in A\times \supp^*(\bX)$,
\[p_-(a|w):=\inf_{w'\in \supp^*(\bX)\,:\,w'\succeq w}p(a|w').\]
For simplicity, we extend this definition to $\tilde{w}\in A^*\backslash\supp^*(\bX)$ by setting $p_-(a|\tilde{w})=p_-(a|w)$ where $w$ is the longest suffix of $\tilde{w}$ that belongs to $\supp^*(\bX)$ (this is always well defined because $o\in\supp^*(\bX)$). We note that with this definition $p(a|w)$ is increasing with $w$.
\begin{equation}\label{eq:increasing}\forall a\in A,\,\forall w,\tilde{w}\in A^*\,:\,\tilde{w}\succeq w\Rightarrow p_-(a|\tilde{w})\geq p_-(a|w).\end{equation}
The intuition for this quantity is that, given a past $X^{-1}_{-\infty}$, $p_-(a|X^{-1}_{-k})$ is the best lower bound for $p(a|X^{-1}_{-\infty})$ that one can obtain from $X_{-k}^{-1}$. This is made precise by \propref{minorantsminimal} below.
\begin{remark}\label{rem:CFF2}The quantity
\[\gamma_k:=\inf_{|w|=k}\sum_{a\in A}p_-(a|w)\]
plays a major role in the aforementioned paper of Comets et al. \cite{CometsFernandezFerrari2002} (cf. \remref{CFF1}), who also noted (implicitly) a version of \propref{minorantsas} below.\end{remark}

\section{Structural results}\label{sec:kalikov.iff}

The goal of this section is to prove our main structural theorem about random context processes.

\begin{theorem}\label{thm:kalikov.iff}A stationary process $\bX=(X_n)_{n\in\Z}$ with finite alphabet $A$ has a random context representation if and only if it is almost surely continuous. Moreover, in that case there exists a so-called {\em minimal random context representation} $(X_n,K_n)_{n\in\Z}$ such that for all $(a,k)\in A\times \N$,
\begin{equation}\label{eq:domination2}\Pr{K_0\leq k,X_0=a\mid X^{-1}_{-\infty}} = p_-(a|X^{-1}_{-k})\mbox{ a.s},\end{equation}
and any other RCR $(X_n,\tilde{K}_n)_{n\in\N}$ will satisfy
\begin{equation}\label{eq:domination}\Pr{\tilde{K}_0\leq k\mid X^{-1}_{-\infty}}\leq \Pr{K_0\leq k\mid X^{-1}_{-\infty}}\mbox{ a.s}.\end{equation}
Finally, an RCR $(X_n,\tilde{K}_n)_{n\in\N}$ achieving almost sure equality in \eqnref{domination} must satisfy \eqnref{domination2} for all $(a,k)\in A \times \N$ (with $\tilde{K}_0$ replacing $K_0$).\end{theorem}
Before we move on to proofs, let us  explain in what sense \thmref{kalikov.iff}  implies that {\em there exists an unique minimal representation.} Recall that a real-valued random variable $U$ is stochastically dominated by $V$ if for all $t\in\R$, $\Pr{U\leq t}\geq \Pr{V\leq t}$. This is interpreted as saying that $V$ is ``larger"~than $U$, an idea that can be made precise via coupling. What \eqnref{domination} means is that, conditionally on $X^{-1}_{-\infty}$, the distribution of $K_0$ in the minimal representation is stochastically dominated by the conditional law of $\tilde{K}_0$ in any other RCR. Moreover, a.s. equality holds in \eqnref{domination} if and only if the conditional distribution of $X_0,K_0$ coincides a.s. with that of $X_0,\tilde{K}_0$. 

In the remainder of the section we prove \thmref{kalikov.iff}. Some preliminary results are collected in Subsection \ref{sub:prepare.kalikov}. We  prove that ``RCR$\Rightarrow$ A.s. continuous"~in Subsection \ref{sub:RCRAS}. The reverse implication and the construction of the minimal representation are given in Subsection \ref{sub:ASRCR}. The proof finishes with the results in Subsection \ref{sub:wrapupkalikov}. 

\subsection{Preliminary propositions}\label{sub:prepare.kalikov}

In the first proposition, we relate a.s. continuity to the minorants. 
\begin{proposition}\label{prop:minorantsas}For any process $\bX$ and any $w\in \supp^*(\bX)$,
\[\frac{1-\sum_{a\in A}p_-(a|w)}{|A|}\leq \cont(w)\leq 1 - \sum_{a\in A}p_-(a|w).\]
In particular, $\bX$ is almost surely continuous if and only if $ \lim_{k\to +\infty}\sum_{a\in A}p_-(a|X^{-1}_{-k})=1$.\end{proposition}
\begin{IEEEproof} The last assertion is a clear consequence of the preceding inequalities, which we now prove.

Given any $w'\in\supp^*(\bX)$ with $w'\succeq w$, and any $a\in A$ we have $p(a|w),p(a|w')\geq p_-(a|w)$. This implies:
\begin{eqnarray*}d_{\rm TV}(p(\cdot|w'),p(\cdot|w)) &=& \sum_{a\in A}(p(a|w')-p(a|w))_+\\ \mbox{($p(a|w)\geq p_-(a|w)$)} &\leq &  \sum_{a\in A}(p(a|w')-p_-(a|w))_+\\ \mbox{($p(a|w')\geq p_-(a|w)$)}&=&\sum_{a\in A}(p(a|w')-p_-(a|w)) \\ \mbox{($\sum_{a\in A}p(a|w')=1$)}&=&  1 -\sum_{a\in A}p_-(a|w).\end{eqnarray*}
Taking the sup over $w'\succeq w$  in the support gives $\cont(w)\leq 1 -\sum_{a\in A}p_-(a|w)$. To get the lower bound in the Proposition, note that, for the same set of $w'$,
$p(a|w')\geq p(a|w)-d_{\rm TV}(p(\cdot|w'),p(\cdot|w))\geq p(a|w)-\cont(w)$. Taking the infimum over these $w'$ gives $p_-(a|w)\geq p(a|w)-\cont(w)$, and summing over $a\in A$ gives
\[\sum_{a\in A}p_-(a|w)\geq \sum_{a\in A}(p(a|w)-\cont(w)) = 1 - |A|\,\cont(w)\] or equivalently \[\cont(w)\geq\frac{1-\sum_{a\in A}p_-(a|w)}{|A|}.\]\end{IEEEproof}

The next result shows that one may recover the transition probabilities from the minorants $p_-$ assuming a.s. continuity.

\begin{proposition}\label{prop:transitionsfromminorants}If $\bX$ is almost surely continuous, then for all $a\in A$, the following identity holds almost surely:
\[p(a|X^{-1}_{-\infty})=\lim_{k\to +\infty}p_-(a|X^{-1}_{-k}).\]\end{proposition}
\begin{IEEEproof}In this proof we assume that $X^{-1}_{-\infty}$ belongs to the set of pasts such that $p(a|X^{-1}_{-\infty})=\lim_{k\to +\infty}p(a|X^{-1}_{-k})$. This set has probability $1$ (cf. the comments in the end of \secref{notation}). 

Notice first of all that \eqnref{increasing} implies that $\lim_{k\to +\infty}p_-(a|X^{-1}_{-k})$ always exists. Secondly, for all $k\leq \ell$ and all $a\in A$, $p(a|X^{-1}_{-\ell})\geq p_-(a|X^{-1}_{-k})$, and taking the limit in $\ell\to +\infty$ and then in $k\to +\infty$ gives:
\begin{equation}\label{eq:ineqtwolims}\forall a\in A\,:\,p(a|X^{-1}_{-\infty})\geq \lim_{k\to +\infty}p_-(a|X^{-1}_{-k}).\end{equation}
Now assume $X^{-1}_{-\infty}$ is such that we have a strict inequality above for some $a\in A$. Then clearly:
\begin{eqnarray*}1 &=& \sum_{a\in A}p(a|X^{-1}_{-\infty})\\ &>&\sum_{a\in A}\lim_{k\to +\infty}p_-(a|X^{-1}_{-k})\\ &=& \lim_{k\to +\infty}\sum_{a\in A}p_-(a|X^{-1}_{-k}).\end{eqnarray*}
Therefore, assuming \eqnref{ineqtwolims} is strict for some $a$ implies that $X^{-1}_{-\infty}$ belongs to the zero measure set where $\lim_{k\to +\infty}\sum_{a\in A}p_-(a|X^{-1}_{-k})<1$ (cf. \propref{minorantsas}). It follows that equality holds in \eqnref{ineqtwolims} a.s. for all $a\in A$.\end{IEEEproof}

Finally, we make precise our intuition for $p_-(a|X^{-1}_{-k})$: that by looking $k$ symbols into the past, we can only know that $p(a|X^{-1}_{-\infty})\geq p_-(a|X^{-1}_{-k})$ 
\begin{proposition}\label{prop:minorantsminimal}For all $w\in\supp^*(\bX)$, all $a\in A$, and all random context representations $(X_n,K_n)_{n\in\Z}$ of $\bX=(X_n)_{n\in\Z}$:
\begin{equation}\label{eq:asgoal2}p_-(a|w)\geq \Pr{K_0\leq |w|,X_0=a\mid X^{-1}_{-|w|}=w}.\end{equation}\end{proposition}
\begin{IEEEproof}Take some $u\in\supp^*(\bX)$ with $u\succeq w$. Note that $p(a|u)=\Pr{X_0=a\mid X^{-1}_{-|u|}=u}\geq (I),$ where \[(I):=\Pr{X_0=a,K_0\leq |w|\mid X^{-1}_{-|u|}=u}.\]We {\em claim} that $(I)= \Pr{K_0\leq |w|,X_0=a\mid X^{-1}_{-|w|}=w}$. To see this we first rewrite \[(I)=\frac{\Ex{\Pr{X_0=a,K_0\leq |w|\mid X^{-1}_{-\infty}}\Ind{X^{-1}_{-|u|}=u}}}{\Pr{X^{-1}_{-|u|}=u}}.\] We may use the definition of RCR to replace $X_{-\infty}^{-1}$ by $X_{-|w|}^{-1}$ inside the above expectation:
\[(I)=\frac{\Ex{\Pr{X_0=a,K_0\leq |w|\mid X^{-1}_{-|w|}}\Ind{X^{-1}_{-|u|}=u}}}{\Pr{X^{-1}_{-|u|}=u}}.\]
It is well-known that measure theoretic conditional probability of an event $\Pr{E\mid X^{-1}_{-|w|}}$ equals the usual conditional probability $\Pr{E\mid X^{-1}_{-|w|}=w}$ almost surely in the cylinder set $\{X^{-1}_{-|w|}=w\}$. Since $u\succeq w$, $\{X^{-1}_{-|w|}=w\}$ contains the event $\{X^{-1}_{-|u|}=u\}$, we deduce the claimed identity  \[(I)=\Pr{X_0=a,K_0\leq |w|\mid X^{-1}_{-|w|}=w}.\] 
It follows that for all $u\succeq w$, $u\in \supp^*(\bX)$, \[p(a|u)\geq (I)=\Pr{X_0=a,K_0\leq |w|\mid X^{-1}_{-|w|}=w}.\]
Taking the infimum over $u$ finishes the proof.\end{IEEEproof}
\ignore{
The inequality in the next proposition is the key to our characterization of processes with RCRs, as well as to our proof that the minimal representation is unique. 
}\begin{remark}\label{rem:increasing}Note that the proof of \propref{transitionsfromminorants} shows that $p(a|X^{-1}_{-\infty})\geq p_-(a|X^{-1}_{-k})$ almost surely. For some later results it will be convenient to have $p(a|x)\geq p_-(a|x^{-1}_{-k})$ for all $x\in A^{\Z_-}$. To achieve this, we redefine the transition probabilities for $(x,a)\in A^{\Z_-}\times A$ as follows
\begin{IEEEeqnarray*}{rCl}p(a|x) &=& \lim_{k\to +\infty}p_-(a|x^{-1}_{-k}) \\ & & \: + \left(1 - \sum_{b\in A}\lim_{k\to +\infty}p_-(b|x^{-1}_{-k})\right)\frac{1}{|A|}\end{IEEEeqnarray*}
This is well defined and measurable, since $p_-(a|x^{-1}_{-k})$ is increasing with $k$. For the same reason, $p(a|x)\geq p_-(a|x^{-1}_{-k})$ always holds. \propref{transitionsfromminorants} implies this (re)definition of $p$ is a valid choice of regular conditional probabilities.\end{remark}

\subsection{RCR $\Rightarrow$ A.s. continuous}\label{sub:RCRAS} 
\begin{lemma}\label{lem:kalikov.if}Suppose $\bX=(X_n)_{n\in\N}$ has a random context representation. Then $(X_n)_{n\in\N}$ is a. s. continuous.\end{lemma}
\begin{IEEEproof}By \propref{minorantsas}, what we need to show is that the non-decreasing sequence
$\{\sum_{a\in A}p_-(a|X^{-1}_{-k})\}_{k\in\N}$ converges to $1$ almost surely. Since $0\leq \sum_{a\in A}p_-(a|X^{-1}_{-k})\leq 1$, there is no question that the sequence always converges, and one may deduce from Dominated Convergence that $\sum_{a\in A}p_-(a|X^{-1}_{-k})\to 1$ if and only if the next condition is satisfied:
\begin{equation}\label{eq:kalikov.if}{\bf (want)}\;\;\lim_k\Ex{\sum_{a\in A}p_-(a|X^{-1}_{-k})}=1.\end{equation}
To prove \eqnref{kalikov.if}, fix a RCR $(X_n,K_n)_{n\in\Z}$ for $\bX$ and use \propref{minorantsminimal} to obtain that, for all $k\in\N$,
\begin{eqnarray*}\sum_{a\in A}p_-(a|X^{-1}_{-k})&\geq& \sum_{a\in A}\Pr{K_0\leq k,X_0=a\mid X^{-1}_{-k}}\\ &=&\Pr{K_0\leq k\mid X^{-1}_{-k}}.\end{eqnarray*}
Taking expectations and then limits, we have that $\lim_k\Ex{\sum_{a\in A}p_-(a|X^{-1}_{-k})}$  is at least
\[\lim_k\Ex{\Pr{K_0\leq k\mid X^{-1}_{-k}}} = \lim_k\Pr{K_0\leq k},\]
which equals $1$ because $K_0<+\infty$ almost surely. This implies \eqnref{kalikov.if} and finishes the proof.\end{IEEEproof}

\subsection{A.s continuous $\Rightarrow$ RCR $+$ minimal representation}\label{sub:ASRCR}
\begin{lemma}\label{lem:kalikov.onlyif}If $(X_n)_{n\in\N}$, then it has a RCR $(X_n,K_n)_{n\in\Z}$ with the following property: for all $a\in A$ and $k\in\N$,
\begin{equation}\label{eq:formulaminimal}\Pr{X_0=a,\,K_0\leq k\mid X^{-1}_{-\infty}} = p_-(a|X^{-1}_{-k})\mbox{ a.s.}.\end{equation}\end{lemma}

\begin{IEEEproof}We may assume that the same probability space where $\bX$ is defined supports a sequence $(U_n)_{n\in\Z}$ of i.i.d. random variables that are uniform over $[0,1]$ and independent from $\bX$. We define $K_n$ as follows: define the set
\[\sS_n:=\left\{k\in\N\,:\, U_n\leq \frac{p_-(X_n|X^{n-1}_{n-k})}{p(X_n|X^{n-1}_{-\infty})}\right\},\]
and set $K_n=+\infty$ if $\sS_n=\emptyset$, or $K_n=\min \sS_n$ otherwise (this definition makes sense whenever $p(X_n|X^{n-1}_{-\infty})>0$, which happens a.s.; set $K_n=+\infty$ also when $p(X_n|X^{n-1}_{-\infty})=0$). 

We {\em claim} that $(X_n,K_n)_{n\in\Z}$ is a RCR for $\bX$. To see this, note that $K_n$ is clearly measurable for all $n\in\N$, and that the process $(X_n,K_n)_{n\in\Z}$ is stationary because $(X_n,U_n)_{n\in\Z}$ is stationary and $(X_n,K_n)=f(X_n,U_n)$, for all $n$, where  $f$ is a fixed measurable function which we do not bother to specify explicitly.  

To check that $K_0<+\infty$ a.s., recall from \propref{transitionsfromminorants} that $p_-(X_0|X^{-1}_{-k})/p(X_0|X^{-1}_{-\infty})\to 1$ a.s. as $k\to +\infty$. Therefore, if $U_0<1$ (which happens almost surely), there a.s. is some $k$ such that $U_0\leq p_-(X_0|X^{-1}_{-k})/p(X_0/X^{-1}_{-\infty})$, and the set $\sS_0$ defined above is non-empty.  

We now prove \eqnref{formulaminimal}. The main point, which follows from the definition of $K_n$, is that, in the full probability event where $p(X_0|X^{-1}_{-\infty})>0$, 
\[K_0\leq k\Leftrightarrow U_0\leq \frac{p_-(X_0|X^{-1}_{-k})}{p(X_0|X^{-1}_{-\infty})}.\]
Since $U_0$ is independent from the process $\bX$, we deduce that, almost surely,
\[\Pr{K_0\leq k\mid X^{0}_{-\infty}} =  \frac{p_-(X_0|X^{-1}_{-k})}{p(X_0|X^{-1}_{-\infty})}\]
In particular, whenever $a\in A$ is such that $p(a|X^{-1}_{-\infty})>0$, \begin{IEEEeqnarray*}{lll}&\;&\Pr{X_0=a,K_0\leq k\mid X^{-1}_{-\infty}} \\=&\;& 
\Ex{\Ind{X_0=a}\,\Pr{K_0\leq k\mid X_{-\infty}^{0}}\mid X_{-\infty}^{-1}}\\ =&\;&\Ex{\Ind{X_0=a}\,\frac{p_-(X_0|X^{-1}_{-k})}{p(X_0|X^{-1}_{-\infty})}\mid X_{-\infty}^{-1}} \\ =&  &\frac{p_-(a|X^{-1}_{-k})}{p(a|X^{-1}_{-\infty})}\,\Pr{X_0=a\mid X_{-\infty}^{-1}} \\ =& &
\frac{p_-(a|X^{-1}_{-k})}{p(a|X^{-1}_{-\infty})}\,p(a|X^{-1}_{-\infty}) \\ =& & p_-(a|X^{-1}_{-k}).\end{IEEEeqnarray*}

This proves \eqnref{formulaminimal} when $p(a|X^{-1}_{-\infty})>0$. The identity can be trivially extended to the case $p(a|X^{-1}_{-\infty})=0$, as $p_-(a|X^{-1}_{-k})=0$ a.s. for all $k\in\N$ (cf. \propref{minorantsas}) and the fact that $p_-(a|X^{-1}_{-k})$ increases with $k$). 

To sum up, we have shown that $(X_n,K_n)_{n\in\N}$ is a stationary process with $K_0<+\infty$ a.s. that satisfies \eqnref{formulaminimal}. In particular, we have expressed the conditional probability of $(X_0,K_0)=(a,k)$ given the past as a function of $X^{-1}_{-k}$, namely $p_-(a|X^{-1}_{-k})$. Standard theory implies that $\Pr{X_0=a,\,K_0\leq k\mid X^{-1}_{-\infty}}=\Pr{X_0=a,\,K_0\leq k\mid X^{-1}_{-k}}=p_-(a|X^{-1}_{-k})$ almost surely, for all $k\in\N$, and therefore $(X_n,K_n)_{n\in \Z}$ is indeed a RCR for $\bX$, with the conditional distribution of $X_0,K_0$ specified by \eqnref{domination2}.\end{IEEEproof}

\subsection{Wrapping up}\label{sub:wrapupkalikov}

We now have all the elements necessary to prove \thmref{kalikov.iff}. 

\begin{IEEEproof}[Proof of \thmref{kalikov.iff}] The equivalence between almost sure continuity and existence of an RCR is contained in Lemmas \ref{lem:kalikov.if} and \ref{lem:kalikov.onlyif}. The representation called minimal, which achieves \eqnref{domination2} in the Theorem, is presented in \lemref{kalikov.onlyif}. To prove its minimality and uniqueness, take another representation $(X_n,\tilde{K}_n)_{n\in\Z}$. \propref{minorantsminimal} implies:
\[\Pr{X_0=a,\tilde{K_n}\leq k\mid X^{-1}_{-\infty}}\leq p_-(a|X^{-1}_{-k}),\]
and summing the two sides over $a\in A$ gives \eqnref{domination}. The same reasoning implies that, for any $(a,k)\in A\times\N$:
\begin{IEEEeqnarray*}{c} \Pr{\Pr{X_0=a,\tilde{K_n}\leq k\mid X^{-1}_{-\infty}}<p_-(a|X^{-1}_{-k})}>0\\ \noalign{\noindent implies}\\ \Pr{\Pr{\tilde{K_n}\leq k\mid X^{-1}_{-\infty}}<\sum_{a\in A}p_-(a|X^{-1}_{-k})}>0.\end{IEEEeqnarray*}Therefore, any representation achieving a.s. equality in \eqnref{domination} must satisfy \eqnref{domination2} almost surely.\end{IEEEproof}

\section{Estimation: definitions and basic properties}\label{sec:estimation}

From now on we consider estimation problems for a.s. continuous processes. Throughout this section, $\bX=(X_n)_{n\in\Z}$ is the process and $(X_n,K_n)_{n\in\Z}$ is the minimal random context representation afforded by \thmref{kalikov.iff}. We will assume that the data for estimation is presented in the form of a size $n$ sample $X_1^n$. In a nutshell, we will estimate the transition probabilities indirectly, by trying to estimate the minorants $p_-$ instead of $p$, and this will also allow us to estimate the distribution of $K_0$ given $X^{-1}_{-\infty}$.

We need a few simple definitions before we present our estimator. For any string $w\in A^*$, we let $N_j(w)$ denote the number of occurrences of $w$ as a substring of $X_1^j$:
\[N_{j}(w):= \#\{|w|\leq i\leq j\,:\, X^i_{i-|w|+1}=w\}.\]
Notice that $N_j(o)=j$ for the empty string $o$. For $w\in A^*$ and $a\in A$,
we define the empirical transition probabilities as follows  \begin{equation}\label{eq:defhatp}
\setlength{\nulldelimiterspace}{0pt}
\hatp(a|w):=\left\{\begin{IEEEeqnarraybox}[\relax][c]{l?s}
\frac{N_n(wa)}{N_{n-1}(w)},&if $N_{n-1}(w)>0$;\\
\frac{1}{|A|},&otherwise.
\end{IEEEeqnarraybox}\right.
\end{equation}

\subsection{Definition of the estimator}\label{sub:definitionestimator}

Our estimation procedure consists of several steps. 

\subsubsection{Counting and confidence radii} For each pair $(w,a)\in A^*\times A$ we compute $N_{n-1}(w)$, $N_n(a)$ and $\hatp(a|w)$. We also compute {\em empirical confidence radii $\conf(w,a)$} when $N_{n-1}(w)>0$. Given a pre-specified confidence parameter $\delta>0$, we set 
\begin{equation}\label{eq:deft*ndelta}t_{*,n}(\delta):=\ln\left(\frac{|A|\,(14 + 2\log _2n)\, \left(1 + \frac{n^2}{2}\right)}{\delta}\right),\end{equation}
and define
\begin{IEEEeqnarray}{rCl}\IEEEnonumber\conf(w,a)&:=& 2\sqrt{\hatp(a|w)\,\frac{t_{*,n}(\delta)}{N_{n-1}(w)}} + \\ \label{eq:defconf} & &\:+\,(2+\sqrt{2})\,\frac{t_{*,n}(\delta)}{N_{n-1}(w)} ,\end{IEEEeqnarray}
If $N_{n-1}(w)=0$, we set $\conf(w,a)=1$.(It is not actually necessary to compute $\conf(w,a)$ in this case, but it is convenient to ignore this for the time being.)
  
\subsubsection{Preliminary estimator for $p_-$. }We define:
\begin{equation}\label{eq:hatpminus}\hatpmin(a|w) = \min\limits_{w'\in A^*,\,w'\succeq w}\left(\hatp(a|w') + \conf(w',a)\right).\end{equation}
Note that $\hatpmin(a|w)$ is increasing as a function of $w$ (with the partial order $\preceq$). 

\subsubsection{Excessive bias and truncation.} Given an infinite past $x=x^{-1}_{-\infty}\in A^{\Z_-}$, recall that that $N_{n-1}(x^{-1}_{-k})=0$ implies $\conf(x^{-1}_{-k},a)=1$. This implies the definition below makes sense.
\[\hath(x^{-1}_{-\infty}):=\min\left\{h\in\N\,:\,\sum_{a\in A}\hatpmin(a|x^{-1}_{-h})\geq 1\right\}.\]
$\hath(x^{-1}_{-\infty})$ will be our ``empirical truncation point", where the bias introduced by adding $\conf(w,a)$ in \eqnref{hatpminus} becomes excessive. Notice that, if $\hath(x)>0$, then
\[\widehat{Z}_n(x):=\sum_{b\in A}(\hatpmin(b|x^{-1}_{-\hath(x)})-\hatpmin(b|x^{-1}_{-\hath(x)+1}))>0.\]
In fact this remains true even if $\hath(x)=0$ if we adopt the convention that $\sum_{b\in A}\hatpmin(b|x^{-1}_{+1})=0$ (we will adopt this from now on). We will also use the definition 
\[\widehat{z}_n(x):=1 - \sum_{b\in A}\hatpmin(b|x^{-1}_{-\hath(x)+1}),\]
noting that $0\leq \widehat{z}_n(x)\leq \widehat{Z}_n(x)$.
\subsubsection{The actual estimates} We define our estimate for $p(a|x^{-1}_{-\infty})$ as follows. \begin{IEEEeqnarray}{rCl}\IEEEnonumber \hatP(a|x^{-1}_{-\infty})&:=& \left(1-\frac{\widehat{z}_n(x)}{\widehat{Z}_n(x)}\right)\,\hatpmin(a|x^{-1}_{-\hath(x)+1})\\ \label{defhatPoba}& &  \:+\,\left(\frac{\widehat{z}_n(x)}{\widehat{Z}_n(x)}\right)\,\hatpmin(a|x^{-1}_{-\hath(x)}).\end{IEEEeqnarray}
One may check that $\hatP(\cdot|x^{-1}_{-\infty})$ is a bona fide probability distribution over $A$ for all $x^{-1}_{-\infty}\in A^{\Z_-}$.

Our definitive estimates for the $p_-$ are as follows.
\[\hatPmin(a|x^{-1}_{-k}):=\left\{\begin{array}{ll}\hatpmin(a|x^{-1}_{-k}), & k<\hath(x^{-1}_{-\infty});\\ \hatP(a|x^{-1}_{-\infty}), & k\geq \hath(x^{-1}_{-\infty}).\end{array}\right.\]
Finally, we have our estimate for the conditional cumulative distribution function of $K_0$:
\[\hatF(k|x^{-1}_{-k}):=\sum_{a\in A}\hatPmin(a|x^{-1}_{-k}).\]

\begin{remark}[Computation of the estimator]\label{rem:compute} One way to compute this estimator is to keep an enhanced suffix tree for $X_1^n$, where each node $w$ of the tree stores the values of $N_{n-1}(w)$ and $N_n(wa)$ for $a\in A$. While in principle the estimator involves $w'$ that do not appear in the sample, we have $\conf(w',a)=1$ in those cases, and we may disregard these values in computing $\hatP(\cdot|x)$. This then a matter of walking down the tree and computing the appropriate quantities. This can be trivially implemented in $\bigoh{|A|^2n^2}$ time and requires $\bigoh{|A|n}$ space.\end{remark}

\subsection{Basic results on the estimator}

In this subsection we introduce some of the basic mathematical properties of our estimator. This will culminate with the main theorems on strong consistency (\thmref{strong}) and adaptivity (\thmref{oracle}). The first result is a fairly simple proposition.

\begin{proposition}\label{prop:simpleproposition}Given $x=x^{-1}_{-\infty}\in A^{\Z_-}$ we have the inequalities 
\[\hatpmin(a|x^{-1}_{-\hath(x)+1})\leq \hatPmin(a|x)\leq \hatpmin(a|x^{-1}_{-\hath(x)}).\]Moreover,  $\hatF(k|x^{-1}_{-k}) = \min\{1,\sum_{a\in A}\hatpmin(a|x^{-1}_{-k})\}$ for all $k\in\N$.\end{proposition}
\begin{IEEEproof}By the definition of $\hatP(a|x)$, and the fact that $0\leq \widehat{z}_n(x)/\widehat{Z}_n(x)\leq 1$, we see at once that $\hatP(a|x)$ is a convex combination of \[\hatpmin(a|x^{-1}_{-\hath(x)+1})\mbox{ and }\hatpmin(a|x^{-1}_{-\hath(x)}).\] This gives the claimed inequalities for $\hatP(a|x)$ once we recall that $\hatpmin(a|x_{-k}^{-1})$ is increasing with $k$.
In order to compute $\hatF(k|x^{-1}_{-k})$, we consider two cases.
\begin{IEEEeqnarray*}{rCl}k<\hath(x)&\Rightarrow &\sum_{a\in A}\hatpmin(a|x^{-1}_{-k})< 1, \\ && \hatPmin(a|x^{-1}_{-k}) = \hatpmin(a|x^{-1}_{-k}),\\ && \mbox{and } \hatF(k|x^{-1}_{-k}) = \sum_{a\in A}\hatpmin(a|x^{-1}_{-k});\\
k\geq \hath(x)&\Rightarrow &\sum_{a\in A}\hatpmin(a|x^{-1}_{-k})\geq 1,\\ & &  \hatPmin(a|x^{-1}_{-k}) = \hatP(a|x^{-1}_{-k}) \\ & &\mbox{ and } \hatF(k|x^{-1}_{-k}) =\sum_{a\in A}\hatP(a|x^{-1}_{-k})=1.\end{IEEEeqnarray*}\end{IEEEproof}

The next Lemmas are more technical and we defer their proofs to the Appendix.  We need two important definitions to state them. For $(w,a)\in A^*\times A$ with $N_{n-1}(w)>0$, define what one might call ``semi-empirical probabilities".
\begin{equation}\label{eq:defbarp}\barp(a|w):= \frac{\sum_{i=|w|+1}^n\Ind{X^{i-1}_{i-|w|}=w}\,p(a|X^{i-1}_{1})}{N_{n-1}(w)}.\end{equation}
For completeness, we set $\barp(a|w)=1/|A|$ when $(w,a)\in A^*\times A$ and $N_{n-1}(w)=0$. Define also the good event:
\begin{equation}\label{eq:defgood}\Good:=\bigcap\limits_{(w,a)\in A^*\times A}\left\{\begin{array}{c}|\barp(a|w)-\hatp(a|w)|\\ \leq \conf(w,a)\end{array}\right\}\end{equation}

\begin{lemma}[Proven in Subsection \ref{sub:domempirical}]\label{lem:domempirical}When $\Good$ holds, the following property holds almost surely: for all $w,w'\in A^*$ with $w'\succeq w$,
\[p_-(a|w)\leq \hatpmin(a|w)\leq \barp(a|w') + 2\conf(w',a).\]\end{lemma}

\begin{lemma}[Proven in Subsection \ref{sub:probgood}]\label{lem:probgood}$\Pr{\Good}\geq 1-\delta$.\end{lemma} 

\section{Finite sample adaptivity of the estimator}\label{sec:oracle}

In this section we prove our main result on the adaptive nature of our estimator.  Recall our convention that $\alpha/0=+\infty$ for $\alpha>0$.

\begin{theorem}[Adaptivity]\label{thm:oracle}Given a stationary process $\bX=(X_n)_{n\in\Z}$, a sample size $n\in\N$, and a confidence parameter $\delta\in (0,1)$, the estimator defined in Subsection \ref{sub:definitionestimator} satisfies the following property with probability $\geq 1-\delta$: for every $x=x^{-1}_{-\infty}\in A^{\Z_-}$ 
\begin{IEEEeqnarray}{rC}\IEEEnonumber & d_{\rm TV}(p(\cdot|x),\hatP(\cdot|x))\\ \label{eq:oracleineq} \leq &\inf_{L\in\N}\left\{1 - \sum_{a\in A}p_-(a|x^{-1}_{-L}) +c\,\sqrt{\frac{|A|\,t_{*,n}(\delta)}{N_{n-1}(x^{-1}_{-L})}}\right\}\end{IEEEeqnarray}
where $t_{*,n}(\delta)$ was defined in \eqnref{deft*ndelta} and $c=(8+2\sqrt{2})$ is universal. (Here we assume that $p$ is as defined in \remref{increasing}.)\end{theorem}

This result is interesting in its own right, and will also be used in the proof of \thmref{strong} on strong consistency. Since its statement might not be entirely transparent, we briefly discuss it before presenting the proof in Subsection \ref{proof:oracle}. 

\subsection{Some aspects of the result}

\subsubsection{On the bound in the Theorem}\label{sub:bound} The optimization over $L$ in the RHS of \eqnref{oracleineq} may be restricted to $L\in\N$ with $N_{n-1}(x^{-1}_{-L})>0$. Clearly, the bound is vacuous unless $|A|
\ll n$, and we may simplify the bound as follows
\begin{IEEEeqnarray}{rC}\label{eq:oracleineq3}\IEEEnonumber & d_{\rm TV}(p(\cdot|x),\hatP(\cdot|x))\\ \label{eq:oracleineq4}\leq & \inf\limits_{L\in\N}\left\{1 - \sum_{a\in A}p_-(a|x^{-1}_{-L})  + C\,\sqrt{\frac{|A|\,\ln(n/\delta)}{N_{n-1}(x^{-1}_{-L})}}\right\}\end{IEEEeqnarray}
with $C>0$ universal. Another point is that taking $L$ with $N_{n-1}(x^{-1}_{-L})\leq \ln n$ gives vacuous results. In particular, when the transition probabilities are lower bounded by some constant $a>0$,  $N_{n-1}(x^{-1}_{-L})$ decays exponentially with $L$, and the range of useful $L$ is restricted to $\bigoh{\ln n}$. On the other hand, there {\em are} processes where taking much larger $L$ is desirable. This is the case with the renewal processes discussed in \thmref{minimax} below. 

\subsubsection{Why \thmref{oracle} is an oracle inequality}\label{sub:oracle} Let $(X_n,K_n)_{n\in\N}$ be the minimal RCR of $\bX=(X_n)_{n\in\N}$. Take a symbol $?\not\in A$ and, for each $L\in\N$, define a conditional distribution $p_L(\cdot|x)$ on $A\cup\{?\}$ as follows:
\[p_L(a|x):=\left\{\begin{array}{ll}p_-(a|x^{-1}_{-L}),& a\in A; \\ 1 - \sum_{b\in A}p_-(b|x^{-1}_{-L}),&a=?.\end{array}\right.\]
Equivalently, $p_L(\cdot|X^{-1}_{-\infty})$ is the law of a random symbol $\widetilde{X}_0$, where $\widetilde{X}_0=X_0$ if $K_0\leq L$ and $\widetilde{X}_0=?$ if $K_0>L$. Clearly,
\begin{IEEEeqnarray*}{rCl}d_{\rm TV}(p(\cdot|x),p_L(\cdot|x)) &=& \sum_{a\in A\cup\{?\}}(p_L(a|x)-p(a|x))_+ \\ &=& 1 - \sum_{a\in A}p_-(a|x^{-1}_{-L}).\end{IEEEeqnarray*}
This definition allows us to rephrase \thmref{oracle} as an oracle inequality: with probability $\geq 1-\delta$, for all $x\in A^{\Z_-}$:
\begin{IEEEeqnarray}{rC}\IEEEnonumber & d_{\rm TV}(p(\cdot|x),\hatP(\cdot|x))\\ \label{eq:oracleineq2}\leq & \inf_{L\in\N}\left\{d_{\rm TV}(p(\cdot|x),p_L(\cdot|x)) + c\,\sqrt{\frac{|A|\,t_{*,n}(\delta)}{N_{n-1}(x^{-1}_{-L})}}\right\}.\end{IEEEeqnarray}
That is, for any given past $x$, the estimate for $p(\cdot|x)$ is as good as the best estimate of the form $p_L(\cdot|x)$ up to a penalty term that is of the order $\sqrt{\ln (n|A|/\delta)/N_{n-1}(x^{-1}_{-L})}$. Note that this is a sharp inequality: the constant in front of $d_{\rm TV}(p(\cdot|x),p_L(\cdot|x))$ in the RHS is $1$. Moreover, it is ``pastwise"~in that the $L$ achieving the infimum in the RHS of \eqnref{oracleineq2} may depend on $x$.  

\subsubsection{The sample-dependent penalty term}\label{sub:empiricalpenalty} Note that the second term in the RHS of both \eqnref{oracleineq} and \eqnref{oracleineq2} depends on an empirical quantity $N_{n-1}(x^{-1}_{-L})$. This is not standard, but we view it as a blessing in disguise. On the one hand, \eqnref{oracleineq2} holds under essentially no assumptions, and this is essential for our strong consistency result (\thmref{strong}). 

On the other hand, given appropriate mixing rates for the process, it is often possible to replace $N_{n-1}(x^{-1}_{-L})$ by $n\,\Pr{X^{-1}_{-L}=x^{-1}_{-L}}$ for all $L$ that are not ``too large". This implies that, if we are so inclined, we may replace the empirical $N_{n-1}(x^{-1}_{-L})$ term with its non-empirical counterpart over a restricted range of $L$. The key point, however, is that \thmref{oracle} accommodates this restriction {\em without any a-priori knowledge about the decay of mixing coefficients of $\bX$.} 

\subsubsection{How to apply the Theorem to context tree processes}\label{sub:minimax} Recall from \remref{context} that a process is a context tree process if and only if the continuity rates $\cont(X_{-k}^{-1})$ satisfy $\Pr{\cont(X^{-1}_{-k})=0\mbox{ for some $k\in \N$}}=1$. Combining this remark with \propref{minorantsas} shows that this holds if and only if for $\Prpwo{past}-a.e.$ $x\in A^{\Z_-}$ there exists a $L(x)\in\N$ such that $p_-(a|x^{-1}_{-L(x)})=p(a|x)$. Plugging this back into the Theorem gives that the following holds with probability $\geq 1-\delta$: for $\Prpwo{past}$-a.e. $x\in A^{\Z_-}$ with $N_{n-1}(x^{-1}_{-L(x)})>0$
\begin{equation}\label{eq:contexttreebound}d_{\rm TV}(p(\cdot|x),\hatP(\cdot|x))\leq C\,\sqrt{\frac{|A|\,\log(n|A|/\delta)}{N_{n-1}(x^{-1}_{-L(x)})}},\end{equation}
with $C>0$ universal. This is not necessarily the best way to apply \eqnref{oracleineq} for any given process: a smaller value of $L$, with greater bias, might be compensated by a much smaller ``variance" term. However, \eqnref{contexttreebound} suffices to prove nearly minimax bounds for renewal processes (\thmref{minimax}). 

\subsubsection{How to apply the Theorem to random Markov chains}\label{sub:randommarkovapply} For random Markov chains one can check that $1 - \sum_{a\in A}p_-(a|x^{-1}_{-L})\to 0$ when $L\to +\infty$, {\em uniformly in $x$}. This implies that, if $\bX$ is ergodic, the RHS of \eqnref{oracleineq} may be made uniformly small for all $x\in \supp(\bX)$, by taking a large $L$ and then letting $n\to +\infty$. We omit the details, since a similar argument is given in the first part of the proof of \thmref{strong} below.

\subsection{Proof of the adaptivity theorem} \label{proof:oracle}

\begin{IEEEproof}[Proof of \thmref{oracle}] Given $(w,a)\in A^*\times A$, consider the events
\begin{IEEEeqnarray}{rCl}\IEEEnonumber E^-_{w,a}&:=&\{p_-(a|w)\leq \hatpmin(a|w)\},\\
\IEEEnonumber E^+_{w,a}&:=&\{\hatpmin(a|w)\leq \inf_{w'\succeq w}(\barp(a|w')+2\conf(w',a))\}.\end{IEEEeqnarray}

Also set $E=\cap_{(w,a)\in A^*\times A}(E^+_{w,a}\cap E^-_{w,a})$. Combining \lemref{domempirical} and \lemref{probgood} reveals $\Pr{E}\geq \Pr{\Good}\geq 1-\delta$. The upshot is that the following deterministic statement implies the Theorem. 

\begin{center}{\em When $E$ holds, \eqnref{oracleineq} is satisfied.}\end{center}

To do this, we {\em assume from now on that $E$ holds} and proceed to prove \eqnref{oracleineq}. It suffices to show that given any infinite past $x\in A^{\Z_-}$ and any number $L\in\N$ with $N_{n-1}(x^{-1}_{-L})>0$,
\begin{IEEEeqnarray}{rC}\IEEEnonumber & \mbox{\bf (want)     }d_{\rm TV}(p(\cdot|x),\hatP(\cdot|x))\\ \label{eq:oraclewant}\leq& \; 1 - \sum_{a\in A}p_-(a|x^{-1}_{-L}) + c\,\sqrt{\frac{|A|\,t_{*,n}(\delta)}{N_{n-1}(x^{-1}_{-L})}}.\end{IEEEeqnarray}
To prove \eqnref{oraclewant} we consider the cases $L<\hath(x)$ and $L\geq \hath(x)$ separately.\\

\noindent{\em Case $1$ - $k<\hath(x)$.}  \propref{simpleproposition} gives \[\hatP(a|x)\geq \hatpmin(a|x^{-1}_{-\hath(x)+1}).\] Under the event $E$ we also have $L<\hath(x)$ and \[ \hatpmin(a|x^{-1}_{-\hath(x)+1})\geq p_-(a|x^{-1}_{-\hath(x)+1}),\] so that \[p_-(a|x^{-1}_{-\hath(x)+1})\geq p(a|x^{-1}_{-L})\] because $p(a|w)$ increases with $w$. We deduce that 
\[\hatP(a|x)\geq p_-(a|x^{-1}_{-L}),\] and the formulae for total variation distance imply
\begin{eqnarray*}\nonumber d_{\rm TV}(p(\cdot|x),\hatP(\cdot|x)) &=& \sum_{a\in A}(p(a|x) - \hatP(a|x))_+\\  &\leq & \sum_{a\in A}(p(a|x) - p_-(a|x^{-1}_{-L}))_+\\ \mbox{($p\geq p_-$)} &=& \sum_{a\in A}(p(a|x) - p_-(a|x^{-1}_{-L})) \\ (\sum_{a}p(a|x)=1)&=& 1 - \sum_{a\in A}p_-(a|x^{-1}_{-L}).\end{eqnarray*}
So \eqnref{oraclewant} holds in this case.\\

\noindent {\em Case $2$ - $L\geq \hath(x)$.} In this case we observe that, when $E$ holds, 
\[\hatpmin(a|x^{-1}_{-\hath(x)})\leq \barp(a|x^{-1}_{-L}) + 2\conf(x^{-1}_{-L},a).\]
Moreover, \propref{simpleproposition} gives
\[\hatP(a|x)\leq  \hatpmin(a|x^{-1}_{-\hath(x)}).\]
Therefore
\begin{equation}\label{eq:hatPless}\hatP(a|x)\leq  \barp(a|x^{-1}_{-L}) + 2\conf(x^{-1}_{-L},a).\end{equation}
We may combine this with the inequality $\barp(a|x^{-1}_{-L}) + 2\conf(x^{-1}_{-L},a)\geq p_-(a|x^{-1}_{-L})$, which follows from $E$. Putting all of these inequalities together shows that, for any $a\in A$ 
\begin{IEEEeqnarray*}{rC}& (\hatP(a|x) - p(a|x))_+\\ \leq & \;\barp(a|x^{-1}_{-L}) + 2\conf(x^{-1}_{-L},a)-p_-(a|x^{-1}_{-L}).\end{IEEEeqnarray*}
Adding these bounds and checking the formula for $d_{\rm TV}$ allows us to conclude that \begin{IEEEeqnarray}{rCl}\nonumber d_{\rm TV}(p(\cdot|x),\hatP(\cdot|x)) & = &\sum_{a\in A}(\barp(a|x) - p_-(a|x_{-L}^{-1}))\\ \nonumber & & \:+\,2\sum_{a\in A}\conf(x^{-1}_{-L},a)\\ \nonumber \mbox{($\sum_a\barp(a|x_{-L}^{-1})=1$)} &=& 1 - \sum_{a\in A}p_-(a|x_{-L}^{-1}) \\  \label{eq:almostoraclewant}& & \:+\,2\sum_{a\in A}\conf(x^{-1}_{-L},a).\end{IEEEeqnarray}
We will now to bound the sum $2\sum_{a\in A}\conf(x^{-1}_{-L},a)$ assuming $N_{n-1}(x^{-1}_{-L})>0$. First we recall 
\begin{IEEEeqnarray}{rCl}\IEEEnonumber\conf(x^{-1}_{-L},a)&\leq & 2\,\sqrt{\frac{\hatp(a|x^{-1}_{-L})\,t_{*,n}(\delta)}{N_{n-1}(x^{-1}_{-L})}} \\ \label{eq:confbeforesum}& & \:+\, (2+\sqrt{2})\,\frac{t_{*,n}(\delta)}{N_{n-1}(x^{-1}_{-L})}.\end{IEEEeqnarray}
We wish to add the values in the RHS of \eqnref{confbeforesum} over $a$. We will use the following bound (proven via Cauchy-Schwartz):
\begin{IEEEeqnarray*}{rCl}\IEEEnonumber \sum_{a\in A}\sqrt{\hatp(a|x^{-1}_{-L})}&\leq &\sqrt{|A|}\,\sqrt{\sum_{a\in A}\hatp(a|x^{-1}_{-L})}\\ \mbox{($\sum_a\hatp(a|x^{-1}_{-L})=1$)}&=&\sqrt{|A|}.\end{IEEEeqnarray*}
This gives a bound for the sum of $\conf(x^{-1}_{-L},a)$
\begin{IEEEeqnarray*}{rCl}2\sum_{a\in A}\conf(x^{-1}_{-L},a)&\leq &4\,\sqrt{\frac{|A|\,t_{*,n}(\delta)}{N_{n-1}(x^{-1}_{-L})}} \\ & & \:+\:(4+2\sqrt{2})\,\frac{|A|\,t_{*,n}(\delta)}{N_{n-1}(x^{-1}_{-L})}.\end{IEEEeqnarray*}
If $|A|\,t_{*,n}(\delta)\leq N_{n-1}(x^{-1}_{-L})$, we obtain
\[2\sum_{a\in A}\conf(x^{-1}_{-L},a) \leq (8+2\sqrt{2})\,\sqrt{\frac{|A|\,t_{*,n}(\delta)}{N_{n-1}(x^{-1}_{-L})}},\]
and combining this with \eqnref{almostoraclewant} gives the inequality \eqnref{oraclewant}. If on the other hand $|A|\,t_{*,n}(\delta)>N_{n-1}(x^{-1}_{-L})$, the RHS of \eqnref{oraclewant} is larger than one, whereas the LHS is upper bounded by one, so \eqnref{oraclewant} is trivially true. In either case, we have shown that \eqnref{oraclewant} holds whenever $L\geq \hath(x)$ and $N_{n-1}(x^{-1}_{-L})>0$. Since the bound is trivial when $N_{n-1}(x^{-1}_{-L})=0$, and the case $L<\hath(x)$ has been taken care of, this finishes the proof. \end{IEEEproof}

\section{Strong consistency}\label{sec:strong}

The main result of this section is that our estimator is strongly consistent under ergodicity. 

\begin{theorem}[Strong consistency]\label{thm:strong}Assume $\bX$ is a stationary, ergodic and almost surely continuous process. For each $n\in\N$, define estimators $\hatPmin$, $\hatP$ and $\hatF$ as above from the sample $X_1^n$, with a choice of $\delta=\delta_n$ satisfying $\sum_n\delta_n<+\infty$ and $\lim_n{\log(1/\delta_n)/n}\to 0$. Then there exists a set $\sT\subset A^{\Z_-}$ of probability $\Prp{past}{\sT}=1$ such that, with probability $1$ over the sample $X_1^{+\infty}$, for all $x\in \sT$ and $k\in\N$
\begin{IEEEeqnarray}{rCl}%\label{eq:strong0}\forall (a,k)\in A\times \N\,: \hatPmin(a|x^{-1}_{-k})\leq p_-(a|x^{-1}_{-k}),\\ 
\label{eq:strong1}\hatF(k|x)&\to& \Pr{K_0\leq k\mid X^{-1}_{-\infty}=x};\\ \label{eq:strong2}
d_{\rm TV}(p(\cdot|x),\hatP(\cdot|x))&\to &0;\end{IEEEeqnarray}
when $n\to +\infty$.\end{theorem}

\begin{remark}The convergence over $x\in\sT$ is {\em not} uniform in general. It would not be hard to show that uniform convergence of our estimator holds a.s. if $\bX$ is continuous, with $\sT:=A^{\Z_-}$.\end{remark}

\begin{IEEEproof}The proof consists of three main steps. In {\em Step $1$} we define certain events $E,F,G,H$ and prove that they have probability $1$. In {\em Step 2}, we define $\sT$ with $\Prp{past}{\sT}=1$. In {\em Step $3$} we prove \eqnref{strong1} and \eqnref{strong2} under the assumption that $E\cap F\cap G\cap H\cap I$ holds. Combining the three steps finishes the proof.\\

\noindent {\em Step 1 - almost sure events.} Let us first note that for all samples sizes $n$ we may define a corresponding event $\Good=\Good^{(n)}$ via \eqnref{defgood}. We now define:
\begin{IEEEeqnarray}{rCl}\label{eq:defeventE}E&:=&\{\Good^{(n)}\mbox{ holds for all large enough $n$}\};\\
\label{eq:defeventF}F&:=&\bigcap\limits_{(w,a)\in \supp^*(\bX)\times A}\left\{\begin{array}{rcl}\conf(w,a)&\to& 0,\\ \sqrt{\frac{\ln(n/\delta_n)}{N_{n-1}(w)}}&\to& 0\end{array}\right\};\\
\label{eq:defeventG}G&:=& \bigcap\limits_{(w,a)\in \supp^*(\bX)\times A}\{\barp(a|w)\rightarrow p(a|w)\};\\ 
\label{eq:defeventH}H&:=&\bigcap\limits_{x\in A^{\Z_-}}\left\{\begin{array}{c}\mbox{for all large enough $n$,}\\ \mbox{ \eqnref{oracleineq} holds with $\delta=\delta_n$}\end{array}\right\}.\end{IEEEeqnarray}
Let us prove that each of these events has probability $1$. 
We start by showing $\Pr{E^c}=0$. Indeed,
\[E^c = \{(\Good^{(n)})^c\mbox{ infinitely often}\}.\]
Since $\Pr{(\Good^{(n)})^c}\leq \delta_n$ (by \lemref{probgood}) and $\{\delta_n\}_{n\in\N}$ is summable (by assumption), $\Pr{E^c}=0$ follows from the first Borel-Cantelli Lemma.  
By the same token, \thmref{oracle} implies that the probability that \eqnref{oracleineq} does {\em not} hold for a given $n$ is $\leq \delta_n$, and we may equally deduce that $\Pr{H^c}=0$.

To argue that that $\Pr{F}=\Pr{G}=1$ will require the ergodicity and stationarity of $\bX$. Indeed, under these assumptions, for all $(w,a)\in\supp^*(\bX)\times A$, we have that, with probability one, 
\begin{IEEEeqnarray}{rCl}\label{eq:wasolely}\lim_{n\to +\infty}\frac{N_n(wa)}{n} &=& \Pr{X^{0}_{-|w|}=wa};\\ 
\lim_{n\to +\infty}\frac{N_{n-1}(w)}{n} &=& \Pr{X^{-1}_{-|w|}=w}>0\end{IEEEeqnarray}
and therefore, by the definition of $\hatp(a|w)$, we have that, almost surely,
\begin{equation}\label{eq:forgconditional}\lim_{n\to +\infty}\hatp(a|w)= \frac{\Pr{X^{0}_{-|w|}=wa}}{\Pr{X^{-1}_{-|w|}=w}} = p(a|w).\end{equation}
The a.s. limit \eqnref{wasolely} suffices to show $\Pr{F}=1$. To see this, one notes that for all $(w,a)\in \supp^*(\bX)\times A$,
\[\conf(w,a)\mbox{ is of the order }\sqrt{\frac{\ln(n/\delta_n)}{N_{n-1}(w)}}+\frac{\ln(n/\delta_n)}{N_{n-1}(w)}.\]
By the assumption on $\delta_n$, $\ln(n/\delta_n)/n\to 0$ whereas $N_{n-1}(w)/n$ converges a.s. to a positive limit, so $\conf(w,a)\to 0$. 

Finally, to prove $\Pr{G}=1$ we note that, under $E$ (which holds a.s.), $|\barp(a|w)-\hatp(a|w)|\leq \conf(w,a)$ for all large enough $n$. Moreover, under $F$ (which also has probability $1$), $\conf(w,a)\to 0$, whereas by \eqnref{forgconditional} we have $\hatp(a|w)\to p(a|w)$ almost surely. Therefore $\barp(a|w)\to p(a|w)$ almost surely as well.\\

\noindent {\em Step 2 - definition of $\sT$.} This is fairly straightforward. We let $\sT$ denote the set of all $x\in A^{\Z_-}$ that satisfy:
\begin{IEEEeqnarray*}{l}\forall k\in\N,\, \Pr{X_{-k}^{-1}=x^{-1}_{-k}}>0\mbox{ and }\\
\lim_{k}p(a|x^{-1}_{-k}) = \lim_{k}p_-(a|x^{-1}_{-k}) = p(a|x).\end{IEEEeqnarray*}

The comments in \secref{notation}, \propref{transitionsfromminorants} and \remref{increasing} imply $\Prp{past}{\sT}=1$.  

\noindent {\em Step 3 - \eqnref{strong1} and \eqnref{strong2} hold whenever $E\cap F\cap G\cap H$ holds.} We first show that 
\begin{equation}\label{eq:consistencypmin}\forall (w,a)\in \supp^*(\bX)\times A\,:\, \hatpmin(a|w)\to p_-(a|w).\end{equation} 
To see this, fix $(w,a)$ and some $\eps>0$. Choose $w'\in\supp^*(\bX)$, $w'\succeq w$, such that 
$p(a|w')\leq p_-(a|w)+\eps$. Because $\Good^{(n)}$ holds for all large enough $n$ (under $E$), \lemref{domempirical} implies that, with probability $1$, there exists $n_0\in\N$ such that $\forall n\geq n_0$, \[p_-(a|w)\leq \hatpmin(a|w)\leq \barp(a|w') + 2\conf(w',a).\]
Because of $G$, $\barp(a|w')\to p(a|w')\leq p_-(a|w)+\eps$. Because of $F$, $\conf(w',a)\to 0$. This implies that for all large $n$,
\[p_-(a|w)\leq\hatpmin(a|w)\leq p_-(a|w)+2\eps,\]
and \eqnref{consistencypmin} follows because $\eps>0$ is arbitrary. 

We are now ready to prove the limit in \eqnref{strong1}. For this we fix $x\in\sT$ and $k\in\N$. Note that  
\begin{eqnarray*}\hatF(k|x)&=& \min\{1,\sum_{a\in A}\hatpmin(a|x^{-1}_{-k})\}\mbox{ by Prop. \ref{prop:simpleproposition}}\\ &\to& \min\{1,\sum_{a\in A}p_-(a|x^{-1}_{-k})\} \\ &=& \sum_{a\in A}p_-(a|x^{-1}_{-k}) = \Pr{K_0\leq k\mid X^{-1}_{-\infty}=x}.\end{eqnarray*}
It remains to prove the limit in \eqnref{strong2}. For this we use the events $H$ and $F$. Fix $x\in\sT$. Given $L\in\N$, $x^{-1}_{-L}\in\supp^*(\bX)$ by the definition of $\sT$. Combining \eqnref{oracleineq} (which holds for large enough $n$ under $H$) with $F$ gives
\[\limsup_{n\to +\infty}d_{\rm TV}(p(\cdot|x),\hatP(\cdot|x))\leq 1 - \sum_{a\in A}p_-(a|x^{-1}_{-L}).\]
When $x\in\sT$, $p_-(a|x^{-1}_{-L})\to p(a|x)$ as $L\to +\infty$, and since $\sum_{a\in A}p(a|x)=1$, this implies:
\[\lim_{n\to +\infty}d_{\rm TV}(p(\cdot|x),\hatP(\cdot|x))=0.\]
Since $x\in\sT$ is arbitrary, this finishes the proof.\end{IEEEproof}

\section{Minimax results for binary renewal processes}\label{sec:minimax}

In this section we recall the definition of binary renewal processes and present our minimax results on them.  

\subsection{Definition and basic properties of binary renewal processes}

\begin{definition}\label{def:renewal}Let $\mu$ be a probability distribution on $\Z_+=\{1,2,3,4,\dots\}$ with finite first moment and define
\[\mu_\geq (k):= \sum_{n\geq k}\mu(n)\,\,(k\in\N\cup\{0\}).\] A stationary binary renewal process with arrival distribution $\mu$ is a stationary processes $\bX=(X_n)_{n\in\Z}$ with alphabet $A:=\{0,1\}$ with the following property: letting \[\{0\leq \tau_0<\tau_1<\tau_2<\dots\} = \{n\in\N\,:\, X_n=1\},\]
the random variables $\{\tau_0\}\cup \{\tau_i-\tau_{i-1}\,:\,i\in\Z_+\}$, are independent and, for all $k\in\N$
\begin{itemize}
\item $\Pr{\tau_0=k} = \mu_{\geq }(k)/\sum_{j\in\N}\mu_{\geq }(j)$;
\item $\Pr{\tau_i-\tau_{i-1}=k}=\mu(k)$, $\forall i\in\Z_+$.
\end{itemize}
Given constants $\Gamma,\gamma>0$, we let $\sR(\Gamma,\gamma)$ denote the family of all stationary binary renewal processes with arrival distribution $\mu$ satisfying:
\[\sum_{k\in\Z_+}\mu(k)\,k^{2+\gamma}\leq \Gamma.\]\end{definition} 

We omit the proof of the following simple fact. 

\begin{proposition}[Proof omitted]\label{prop:renewalbasic}A stationary binary renewal process $\bX$ with arrival distribution $\mu$ is a context tree process with $\Prpwo{past}$-a.s. finite context length:
\[L(x_{-\infty}^{-1}):= \inf\{j\in\N\,:\, x_{-j}=1\}.\]
Moreover, in the event $L(X^{-1}_{-\infty})=k$, the following equality holds almost surely: 
\[\Pr{X_0=1\mid X^{-1}_{-\infty}} = p(1|10^{k-1}) = \frac{\mu(k)}{\mu_\geq(k)}\]Finally, \[\Pr{X_{-k}^{-1}=10^{k-1}}=\frac{\mu_\geq(k)}{\sum_j\mu_\geq(j)}.\]\end{proposition}

\subsection{The near minimax bound}\label{sec:minimaxproof}

Given a stationary process $\bX$ with transition probabilities $p$, which induces a measure $\Prpwo{past}$ over $A^{\Z_-}$, and another choice $q:A^{\Z_-}\times A\to [0,1]$ of transition probabilities, we define a {\em loss function}
\[\ell_{\bX}(q):= \int_{A^{\Z_-}}\,d_{\rm TV}(q(\cdot|x),p(\cdot|x))\,d\Prp{past}{x}.\]
In words, $\ell_{\bX}(q)$ measures how close $q$ is to the true transition probabilities of $\bX$, when one averages over infinite pasts.

\begin{theorem}\label{thm:minimax}For any constants $\Gamma,0<\gamma\leq 1$, there exists a constant $C>0$ such that, for all $n\geq 3$, and all processes $\bX\in\sR(\Gamma,\gamma)$,
\[\Pr{\ell_{\bX}(\hatP)\leq C\,\sqrt{\frac{\ln n }{n}}}\geq 1 -\frac{C}{n^{1+\gamma/2}},\]
where $\hatP$ is the estimator defined in Subsection \ref{sub:definitionestimator}, run on a size-$n$ sample $X_1^n$ from $\bX$, with the choice of confidence parameter $\delta =\delta_n:= 1/n^2$.\end{theorem}

{\em Why this is a near-minimax result.} It is well known that binary processes $\bX=(X_n)_{n\in\N}$ with i.i.d. symbols are binary renewal processes with geometric arrival distribution. In particular, all i.i.d. Bernoulli processes with $\Pr{X_0=1} \in [1/3,2/3]$ are contained in $\sR(\Gamma,\gamma)$ when $\Gamma\geq \Gamma_0(\gamma)$. For large $n$, consider two i.i.d. processes $\bX^{(i)}=(X^{(i)}_n)_{n\in\Z}$, $i=1,2$, with $\Pr{X^{(1)}_0=1}=1/2$ and $\Pr{X^{(2)}_0=1}=1/2 + (1/100\sqrt{n})$. As we noted, these processes are both in $\sR(\Gamma,\gamma)$ for large enough $\Gamma$. A simple calculation shows that the Kullback Leiber distance between $(X^{(1)})_1^n$ and $(X^{(2)})_1^n$ is small, and it follows from Pinsker's Lemma that no procedure can distinguish the two processes with more than a small (constant) probability. In particular, any procedure that attempts to estimate the marginal probabilities of both $(X^{(1)})_1^n$ and $(X^{(2)})_1^n$ will obtain an error of more than $1/200\sqrt{n}$ on at least one of the two processes, with probability $\geq \alpha>0$ independent of $n$. The upshot is that any estimator $\hatP$ will satisfy
\[\sup_{\bX\in\sR(\Gamma,\gamma)}\Pr{\ell_{\bX}(\hatP)\geq \,\frac{1}{200\,\sqrt{n}}}\geq\alpha>0.\]In other words, the typical error bound in \thmref{minimax} is at most a $\bigoh{\sqrt{\ln n}}$ factor away from optimality.

\begin{IEEEproof}[Proof of \thmref{minimax}] We fix $\bX\in \sR(\Gamma,\gamma)$, with arrival distribution $\mu$, throughout the proof. We also fix $n\in\N$ and assume $\hatP$ is defined as in the statement of the Theorem. 

The basic strategy in this proof combines elements of Subsections \ref{sub:empiricalpenalty} and \ref{sub:minimax}. Namely, we will combine the fact that $\bX$ is a context tree process (cf. \propref{renewalbasic}) with the approximation $N_{n-1}(x^{-1}_{-k})\approx \Pr{X^{-1}_{-k}=x^{-1}_{-k}}\,n$. The key point is to find the right ``cutoff value"~ $k_*$ for which this is a good approximation, and show that it is indeed good for the vast majority of pasts. 

Because we will only obtain rough estimates, we adopt the following convention throughout this proof: $C,C_0,C_1,\dots>0$ are constants that are allowed to depend only on $\Gamma,\gamma$, and which may change from line to line. 

We begin with some preliminaries. By \propref{renewalbasic}, $L(x)<+\infty$  and 
$p(a|x) = p_-(a|x^{-1}_{-L(x)})$ $\Prpwo{past}$-almost-surely. Recalling that $\delta=n^{-2}$, \eqnref{contexttreebound} in Subsection \ref{sub:minimax} may be used to show that $\Pr{E}\geq 1-n^{-2}$, where:
\begin{equation}\label{eq:defEminimax}E:=\bigcap_{x\in A^{\Z_-}}\left\{ \begin{array}{c}d_{\rm TV}(p(\cdot|x),\hatP(\cdot|x))\\ \leq C\,\sqrt{\frac{\log n}{N_{n-1}(x^{-1}_{-L(x)})}}\end{array}\right\}.\end{equation}
Given $C_0,C_1>0$, define:
\begin{equation}\label{eq:defk*}k_*:=\max\left\{k\in\N\,:\,\mu_{\geq }(k)\geq \frac{C_0\,\log n}{n}\right\},\end{equation}
as well as the event
\begin{equation}\label{eq:defFminimax}F:=\left\{\forall 1\leq k\leq k_*\,:\,\frac{N_{n-1}(10^{k-1})}{n}\geq \frac{\mu_{\geq}(k)}{C_1}\right\}\end{equation}
\lemref{typical} in the Appendix proves that $\Pr{F}\geq 1-C\,n^{-{1-\gamma/2}}$, hence $\Pr{E\cap F}\geq 1-C\,n^{-1-\gamma/2}$. This means that it suffices to prove the following deterministic statement. 
\begin{center}{\em When $E\cap F$ holds, $\ell_{\bX}(\hatP)\leq C\sqrt{\ln n/n}$.}\end{center}
To prove this, we decompose and bound the loss function:
\begin{IEEEeqnarray}{Cl}\IEEEnonumber&  \ell_{\bX}(\hatP) \\ \IEEEnonumber =& \sum_{k=1}^{+\infty}\int_{\{L(x)=k\}}\,d_{\rm TV}(\hatP(\cdot|x),p(\cdot|x))\,d\Prp{past}{x}\\  \IEEEnonumber\leq \: &\sum_{k=1}^{k_*}\int_{\{L(x)=k\}}\, d_{\rm TV}(\hatP(\cdot|x),p(\cdot|x))\,d\Prp{past}{x}\\  \label{eq:dtvless1} & +\,\sum_{k=k_*+1}^{+\infty}\int_{A^{\Z_-}}\,\Ind{L(x)=k}\,d\Prp{past}{x}\\ \label{eq:useEhere}\leq\: & C\,\sum_{k=1}^{k_*}\int_{\{L(x)=k\}}\,\sqrt{\frac{\log n}{N_{n-1}(x^{-1}_{-k})}}\:d\Prp{past}{x}\\ \IEEEnonumber  & +\,\sum_{k=k_*+1}^{+\infty}\Prp{past}{L(x)=k}\\ \label{eq:useFhere} \leq \: & C\,\sum_{k=1}^{k_*}\Prp{past}{L(x)=k}\,\left(\sqrt{\frac{\log n}{n\,\mu_\geq(k)}}\right)\\  \IEEEnonumber & +\,\sum_{k=k_*+1}^{+\infty}\Prp{past}{L(x)=k}\\ \label{eq:lastlineufa} \leq & C\sqrt{\frac{\ln n}{n}}\,\sum_{k=1}^{+\infty}\sqrt{\mu_{\geq }(k)}.\end{IEEEeqnarray}
In this derivation, points \eqnref{dtvless1}, \eqnref{useEhere}, \eqnref{useFhere} and \eqnref{lastlineufa} need some explaining. First, we bounded $d_{\rm TV}\leq 1$ for $k>k_*$ to obtain \eqnref{dtvless1}. In \eqnref{useEhere}, we used the event $E$ to bound $d_{\rm TV}$ in each term. The event $F$ was used in \eqnref{useFhere} to obtain 
\[L(x)=k\Rightarrow x^{-1}_{-k}=10^{k-1}\Rightarrow N_{n-1}(x_{-k}^{-1})\geq\frac{\mu_{\geq }(k)\,n}{C_1}\mbox{ by $F$}.\]Finally, the last inequality \eqnref{lastlineufa} follows from two points. \propref{renewalbasic} and the fact that $\mu_{\geq}(0)=1$ imply $\Prp{past}{L(x)=k}\leq \mu_{\geq}(k)$, and this gives bounds on the terms with $k\leq k_*$ in the sum. For  $k>k_*$ we note that $\mu_{\geq }(k)$ is decreasing in $k$ and that $\mu_{\geq }(k_*+1)<C\sqrt{\ln n /n}$ (by the definition of $k_*$):
\[\mu_{\geq}(k)\leq \sqrt{\mu_{\geq}(k_*+1)\,\mu_{\geq}(k)}\leq \sqrt{\frac{C_0\log n}{n}\,\mu_{\geq }(k)}.\]
This justifies \eqnref{lastlineufa}, and we will be done once we bound $\sum_k\sqrt{\mu_{\geq }(k)}$ uniformly in terms of $\Gamma,\gamma$. But this is simple. Our moment assumption on $\mu$ means that, for any $k\geq 1$,
\[\mu_{\geq }(k) = \sum_{i\geq k}\mu(i)\leq k^{-(2+\gamma)}\,\sum_{i\geq 1}\mu(i)i^{2+\gamma}\leq \Gamma\,k^{-(2+\gamma)}.\] This implies that 
\[\sum_{k\in\Z_-}\sqrt{\mu_{\geq }(k)}\leq \sqrt{\Gamma}\,\sum_{k\in\Z_-}k^{-(1+\frac{\gamma}{2})}<+\infty.\]
We conclude from \eqnref{lastlineufa} that $\ell_{\bX}(\hatP)\leq C\sqrt{\ln n/n}$ whenever $E\cap F$ holds, and this finishes the proof.\end{IEEEproof}

\appendices

\section{Some technical results for the analysis of our estimator}

In this section we introduce a fundamental martingale inequality that is used in the analysis of our estimator, and then prove Lemmas \ref{lem:domempirical} and \ref{lem:probgood}. 

\subsection{The economical Freedman's inequality}\label{sub:freedman}

The following concentration lemma is closely related to Freedman's classical inequality \cite{Freedman1975}. 

\begin{lemma}[Economical Freedman's inequality]\label{lem:freedman} Let $(M_j,\sF_j)_{j=0}^n$ be a martingale such that $M_0=0$ and $|M_j-M_{j-1}|\leq 1$ for all $1\leq j\leq n$. Define the quadratic variation process 
\[V_n:= \sum_{j=1}^n\Ex{(M_j-M_{j-1})^2\mid\sF_{j-1}}.\]
Then for any $t\geq 1$:
\begin{equation}\label{eq:freedman1}\Pr{M_n\geq \sqrt{2t\,v} + \frac{2t}{3},\, V_n\leq v}\leq e^{-t}\,\Pr{V_n>0},\end{equation}
and
\begin{equation}\label{eq:freedman2}\Pr{M_n\geq 2\sqrt{V_n\,t}+t}\leq [7+\log_2 n]\,e^{-t}\Pr{V_n>0}.\end{equation}\end{lemma}

A version of the first inequality was proven in \cite{Freedman1975} without the $\Pr{V_n>0}$ term. The second inequality, with its ``empirical variance term"~$V_n$, is analogous to what Freedman used in \cite{Freedman1975} in a discussion of the Law of the Iterated Logarithm. See also \cite{Csiszar2002} and the more recent papers \cite{GarivierLeonardi2010,BelloniOliveira2011} for related inequalities used in similar contexts. In our setting, adding the $\Pr{V_n>0}$ term turns out to be crucial (cf. \remref{economy} below). 

\begin{IEEEproof} We first show how \eqnref{freedman2} follows from \eqnref{freedman1}. Fix $t\geq 1$. By our assumption that the increments of the martingale are bounded by $1$, we also have $V_n\leq n$. This means:
\[A:= \{M_n\geq 2\,\sqrt{V_n\,t} + t\} = \bigcup_{i=0}^{5 + \lceil\log_2 n\rceil}A_i,\]
where $A_0:= A\cap \{V_n\leq 2^{-6}\}$ and $A_i = A\cap \{2^{i-6}<V_n\leq 2^{i-5}\}$ for $i\geq 0$. 
We bound the probabilities of the $A_i$'s using \eqnref{freedman1}. For $i=0$, note that, if $t\geq 1$, then:
\[V_n\leq v_0:= 2^{-5}\Rightarrow 2\,\sqrt{V_n\,t} + t \geq t \geq \sqrt{2\,v_0\,t} + \frac{2t}{3},\]
so we can apply the Proposition with $v=v_0=2^{-6}$ to obtain:
\begin{IEEEeqnarray*}{rCl} \Pr{A_0} & \leq& \Pr{M_n\geq \sqrt{2\,v_0\,t} + \frac{2t}{3},V_n\leq v_0}\\ &\leq&  e^{-t}\,\Pr{V_n>0}.\end{IEEEeqnarray*}
For $i\geq 1$ we have:
\[2^{i-6}< V_n\leq v_i:= 2^{i-5}\Rightarrow 2\,\sqrt{V_n\,t} + t\geq \sqrt{2\,v_i\,t} + \frac{2t}{3},\]
so another application of \eqnref{freedman1} gives
\begin{IEEEeqnarray*}{rCl}\Pr{A_i}&\leq& \Pr{M_n\geq \sqrt{2\,v_i\,t} + \frac{2t}{3},V_n\leq v_i}\\ &\leq& e^{-t}\,\Pr{V_n>0}.\end{IEEEeqnarray*}
Summing these probabilities gives 
\[\Pr{A}\leq \sum_{i=0}^{5+\lceil \log_2 n\rceil}\Pr{A_i}\leq (6+\lceil \log_2 n\rceil)\,e^{-t}\,\Pr{V_n>0},\]
which implies \eqnref{freedman2} because $\lceil x\rceil \leq x+1$.

We now prove \eqnref{freedman1}. Define $\lambda=\sqrt{2tv}+2t/3$ and let $\Psi(x) := e^x-1-x$ ($x\in\R$). Choose some $\theta>0$ and note that 
$V_n=0\Rightarrow M_n=M_0=0$ almost surely. Defining
\[U_j:= e^{\theta\,M_j - \Psi(\theta)\,V_j},\:j=0,1,2\dots,n,\]
we see that 
\[\Pr{M_n>\lambda,V_n\leq v}\leq \Pr{U_n\geq e^{\theta\,\lambda-\Psi(\theta)\,v},\,V_n>0}.\]
We obtain
\begin{equation}\label{eq:comestosupermartingale} \Pr{M_n>\lambda,V_n\leq v} \leq  e^{-\theta\,\lambda+\Psi(\theta)\,v}\,\Ex{U_n\,\Ind{V_n>0}}\end{equation}
We {\em claim} that:
\begin{claim}\[\Ex{e^{\theta M_n -\Psi(\theta)\,V_n}\,\Ind{V_n>0}}\leq \Pr{V_n>0}.\]\end{claim}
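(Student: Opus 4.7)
The plan is to build the standard Freedman supermartingale $Z_j \equiv \exp(\theta M_j - \Psi(\theta) V_j)$ and exploit the elementary but crucial observation that $Z_n$ is identically $1$ on the event $\{V_n = 0\}$, which is precisely what produces the factor $\Pr{V_n > 0}$ in place of $1$ on the right-hand side.

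First I would verify that $(Z_j)_{j=0}^n$ is a nonnegative supermartingale with $Z_0 = 1$. Writing $D_j = M_j - M_{j-1}$ and $\sigma_j^2 = \Ex{D_j^2 \mid \sF_{j-1}}$, the martingale property together with the bound $|D_j| \leq 1$ and the elementary inequality $e^{\theta x} \leq 1 + \theta x + \Psi(\theta) x^2$ valid for $|x| \leq 1$ gives
\begin{equation*}
\Ex{e^{\theta D_j} \mid \sF_{j-1}} \leq 1 + \Psi(\theta)\,\sigma_j^2 \leq \exp\bigl(\Psi(\theta)\,\sigma_j^2\bigr),
\end{equation*}
from which $\Ex{Z_j \mid \sF_{j-1}} \leq Z_{j-1}$ and therefore $\Ex{Z_n} \leq 1$ follow by iteration.

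The key step is the identification of $Z_n$ on $\{V_n = 0\}$. Since each $\sigma_j^2$ is nonnegative and $V_n = \sum_{j=1}^n \sigma_j^2$, on the event $\{V_n = 0\}$ we have $\sigma_j^2 = 0$ for every $j$. Because $\{\sigma_j^2 = 0\} \in \sF_{j-1}$, multiplying $D_j^2$ by its indicator and taking expectations yields $D_j = 0$ almost surely on that event; summing gives $M_n = 0$ almost surely on $\{V_n = 0\}$, so $Z_n = 1$ there. Combining with Step 1,
\begin{equation*}
\Ex{Z_n \Ind{V_n > 0}} = \Ex{Z_n} - \Ex{Z_n \Ind{V_n = 0}} \leq 1 - \Pr{V_n = 0} = \Pr{V_n > 0},
\end{equation*}
which is exactly the Claim.

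The only place requiring care is the passage from $\sigma_j^2 = 0$ on a set to $D_j = 0$ almost surely on that same set; this is the standard measure-theoretic argument using that $\{\sigma_j^2 = 0\}$ is $\sF_{j-1}$-measurable and $\Ex{D_j^2 \Ind{\sigma_j^2 = 0}} = \Ex{\sigma_j^2 \Ind{\sigma_j^2 = 0}} = 0$. Once this is in hand, the rest is the supermartingale-plus-Markov machinery already implicit in \propref{freedman}, and no nontrivial obstacle remains.
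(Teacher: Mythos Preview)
Your proof is correct, and it is a cleaner route than the one in the paper. Both arguments rest on the same two ingredients: the supermartingale property of $Z_j = e^{\theta M_j - \Psi(\theta)V_j}$ and the fact that $V_j=0$ forces $M_j=0$ almost surely. The difference is in how these ingredients are combined. The paper decomposes $\{V_n>0\}$ into the first-entrance events $E_i=\{V_i>0,\,V_{i-1}=0\}\in\sF_{i-1}$, applies the supermartingale inequality on each piece to bring $Z_n$ down to $Z_{i-1}$, and then uses $V_{i-1}=0\Rightarrow Z_{i-1}=1$ on $E_i$ to sum up $\sum_i\Pr{E_i}=\Pr{V_n>0}$. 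You bypass the decomposition entirely: since $Z_n=1$ on $\{V_n=0\}$, the global bound $\Ex{Z_n}\leq 1$ immediately yields $\Ex{Z_n\Ind{V_n>0}}=\Ex{Z_n}-\Pr{V_n=0}\leq \Pr{V_n>0}$. Your approach is shorter and avoids the stopping-time machinery; the paper's version, on the other hand, would generalize slightly more easily to stopped versions of the inequality, though that is not needed here.
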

Given this fact, optimizing the RHS of \eqnref{comestosupermartingale} over $\theta$ (following the steps of Bennett's inequality in \cite[page 24]{MassartStFlour}) finishes the proof. 

To prove the Claim, we first note that $(U_j,\sF_j)_j$ is a supermartingale; this is proven e.g.. in \cite[Page 32, Lemma 4]{ConcentrationIT}. Now write the event $\{V_n>0\}$ as a disjoint union:
\begin{IEEEeqnarray*}{rCl}\{V_n>0\} &=& \bigcup_{i=1}^nE_i, \mbox{ where}\\ E_i &:= &\{V_i>0\mbox{ and }V_j=0\mbox{ for }j<i\}.\end{IEEEeqnarray*}The $i$th element in this union is $\sF_{i-1}$-measurable. Combining this with the the supermartingale property gives:
\begin{IEEEeqnarray*}{rCl}\Ex{U_n\,\Ind{V_n>0}}& =& \sum_{i=1}^n\Ex{U_n\Ind{E_i}}\\ &=&\sum_{i=1}^n \Ex{\Ex{\left. U_n \right|\sF_{i-1}}\,\Ind{E_i}}\\ &\leq &\sum_{i=1}^n\Ex{U_{i-1}\,\Ind{E_i}}.\end{IEEEeqnarray*}
Now note that in event $E_i$ we have $V_{i-1}=0$ and therefore $M_{i-1}=0$ almost surely. This means that $U_i=1$ a.s. when $E_i$ holds.  Therefore, the expectations in the last display are at most $\Pr{E_i}$, and we obtain:
\[\Ex{U_n\,\Ind{V_n>0}}\leq \sum_{i=1}^n\Pr{E_i} = \Pr{V_n>0},\]
as claimed.\end{IEEEproof}

\subsection{Inequalities under the good event}\label{sub:domempirical}

We now prove \lemref{domempirical} above.
\begin{IEEEproof}[Proof of \lemref{domempirical}] That \[\hatpmin(a|w)\leq \barp(a|w')+2\conf(w',a)\] follows from $\hatpmin(a|w)\leq \hatp(a|w')+\conf(w',a)$ and the fact that $\hatp(a|w')\leq \barp(a|w')+\conf(w',a)$ in the event $\Good$. 

To upper bound $p_-(a|w)$, fix some $w'\in \supp^*(\bX)$ with $w\preceq w'$. In $\Good$ we have the inequality $\hatp(a|w') + \conf(w',a)\geq \barp(a|w')$. Moreover if $N_{n-1}(w')>0$, $\barp(a|w')$ is a convex combination of probabilities $p(a|X^{i-1}_{1})$ with $X^{i-1}_{i-|w|}=w$. We deduce that $\barp(a|w')\geq p_-(a|w)$ almost surely in $\Good$ whenever $w'\succeq w$ and $N_{n-1}(w')>0$. The same inequality extends to $N_{n-1}(w')=0$ because we have assumed $\conf(w',a)=1$ in this case. The upshot is that
\[\forall w'\in A^*\,:\, w'\preceq w\Rightarrow \barp(a|w')+\conf(w',a)\geq p_-(a|w),\]
and taking the infimum over all such $w'$ finishes the proof.\end{IEEEproof}

\subsection{The probability of the good event}\label{sub:probgood}

We now use the full power of \lemref{freedman} to prove \lemref{probgood}. 

\begin{IEEEproof}[Proof of \lemref{probgood}] We apply \lemref{freedman} above. Fix a pair $(w,a)\in A^*\times A$. We set $M_j(w,a):=0$ for $0\leq j\leq |w|$. For larger $j$, we set
\[M_j(w,a):= \sum_{i=|w|+1}^j(\Ind{X_i=a}-p(a|X^{i-1}_{1}))\,\Ind{X^{i-1}_{i-|w|}=w}.\]The point of this definition is that
\begin{equation}\label{eq:martingaleisthething}N_{n-1}(w)>0\Rightarrow \hatp(a|w) - \barp(a|w) = \frac{M_n(w,a)}{N_{n-1}(w)},\end{equation}
as one can check from the definitions of $\hatp$ \eqnref{defhatp} and $\barp$ \eqnref{defbarp}.

Define a filtration via  $\sF_0=\{\emptyset,\Omega\}$ and $\sF_j:=\sigma(X_1,\dots,X_j)$, $j\geq 1$. Let us check that the assumptions of \lemref{freedman} apply, i.e. that $\{(M_j(w,a),\sF_j)\}_{j=0}^n$ is a martingale starting from $0$, with increments increments between $-1$ and $1$. Firstly, note that $M_0(w,a)=0$ and $M_j(w,a) -M_{j-1}(w,a)=0$ when $j\leq |w|$. For larger $j$, the increment $M_j(w,a)-M_{j-1}(w,a)$ is equal to \[ (\Ind{X_j=a}-p(a|X^{j-1}_{1}))\,\Ind{X^{j-1}_{j-|w|}=w},\]
which is between $-1$ and $1$. Moreover, the conditional expectation of this increment with respect to $\sF_{j-1}$ is $0$, since \[\{X^{j-1}_{j-|w|}=w\}\in\sF_{j-1}\mbox{ and }\Pr{X_j=a|\sF_{j-1}} = p(a|X^{j-1}_{1}).\]

We deduce that \lemref{freedman} does apply. Looking at the increments $M_j(w,a)-M_{j-1}(w,a)$ again, we see at once that
\begin{IEEEeqnarray*}{cl} & \Ex{(M_{j}(w,a)-M_{j-1}(w,a))^2\mid\sF_{j-1}} \\ \leq\:\:&   \left\{\begin{array}{ll}0,& j\leq |w|;\\ p(a|X^{j-1}_{1})\,\Ind{X^{j-1}_{j-|w|}=w}, & j>|w|.\end{array}\right.\end{IEEEeqnarray*}
Adding these bounds gives an upper bound for the quadratic variation $V_n(w,a)$ of the martingale $M_n(w,a)$:
\begin{IEEEeqnarray*}{rCl}V_{n}(w,a)&\leq &\sum_{j=|w|+1}^np(a|X^{j-1}_{1})\,\Ind{X^{j-1}_{j-|w|}=w}\\ &=& N_{n-1}(w)\,\barp(a|w)\mbox{  (by \eqnref{defbarp}).}\end{IEEEeqnarray*}
Therefore, $V_n(w,a)>0$ implies $N_{n-1}(w)>0$. Setting
\[\barconf(w,a):=2\sqrt{\frac{\barp(a|w)\,t_{*,n}(\delta)}{N_{n-1}(w)}} + \frac{t_{*,n}(\delta)}{N_{n-1}(w)},\]
when $N_{n-1}(w)>0$, and $\barconf(w,a)=1$ otherwise, we see at once that, when $N_{n-1}(w)>0$,
\[\barconf(w,a)\,N_{n-1}(w)\geq 2\sqrt{t_{*,n}(\delta)\,V_{n}(w,a)} + t_{*,n}(\delta).\]
We deduce from \eqnref{martingaleisthething}, \lemref{freedman} (applied to $\pm M_{n}(w,a)$) and the choice of $t_{*,n}(\delta)$ in \eqnref{deft*ndelta} that
\begin{IEEEeqnarray}{rl}\nonumber & \Pr{|\barp(a|w)-\hatp(a|w)|\geq \barconf(w,a)}\\ 
\IEEEnonumber \leq\:&  \Pr{|M_n(w,a)|\geq 2\sqrt{t_{*,n}(\delta)\,V_{n}(w,a)} + t_{*,n}(\delta)}\\
\IEEEnonumber \leq\:& 2\,[7+\log_2 n]\,e^{-t_{*,n}(\delta)}\Pr{N_{n-1}(w)>0}\\ 
\label{eq:almostgoodfixedw}=\:& \frac{\delta}{|A|\,\left(\frac{n^2}{2}+1\right)}.\end{IEEEeqnarray}
Now define
\[\overline{\Good}:=\bigcap_{(w,a)\in A^*\times A}\{|\barp(a|w)-\hatp(a|w)|\leq  \barconf(w,a)\}.\]
Applying the union bound to \eqnref{almostgoodfixedw}, we obtain
\[1-\Pr{\overline{\Good}}\leq \frac{\delta}{\left(\frac{n^2}{2}+1\right)}\,\sum_{w\in A^*} \Pr{N_{n-1}(w)>0}.\]
The sum in the RHS counts the expected number of substrings $w\in A^*$ that show up in the sample. This number is upper bounded (deterministically) by $1+n^2/2$, so $\Pr{\overline{\Good}}\geq 1-\delta$. 

We will finish the proof by showing that \[\overline{\Good}\subset \Good.\] Comparing these two events, we see that it suffices to prove that, for any $(w,a)\in A^*\times A$,
$|\barp(a|w)-\hatp(a|w)|\leq \barconf(w,a)$ implies $|\barp(a|w)-\hatp(a|w)|\leq \conf(w,a)$. This is trivial when $N_{n-1}(w)=0$, so we assume $N_{n-1}(w)>0$ in what follows. 

Set $x:=\barp(a|w)$, $y:=\hatp(a|w)$ and $\alpha:=t_{*,n}(\delta)/N_{n-1}(w)$. With this notation, $\barconf(w,a)=2\sqrt{\alpha\,x}+\alpha$. Our starting point is $|\barp(a|w)-\hatp(a|w)|\leq \barconf(w,a)$, which reads 
\begin{equation}\label{eq:contabobinha}|x-y|\leq 2\sqrt{\alpha\,x} + \alpha.\end{equation}
We obtain  
\[x-2\sqrt{\alpha\,x}\leq y+\alpha\] 
and completing the square in the LHS gives 
\[(\sqrt{x} - \sqrt{\alpha})^2\leq (\sqrt{y} + \sqrt{2\alpha})^2.\]
We deduce \[\sqrt{x}\leq \sqrt{y}+(1+\sqrt{2})\,\alpha.\]
Plugging this back into \eqnref{contabobinha} gives
\[|\barp(a|w)-\hatp(a|w)|=|x-y|\leq 2\sqrt{\alpha\,y} + (2+\sqrt{2})\,\alpha,\]
and the RHS is equal to 
\[2\sqrt{\hatp(a|w)\,\frac{t_{*,n}(\delta)}{N_{n-1}(w)}} + (2+\sqrt{2})\,\frac{t_{*,n}(\delta)}{N_{n-1}(w)}=\conf(w,a).\]
In other words, we have showed the desired implication, which yields $\overline{\Good}\subset \Good$ and $\Pr{\Good}\geq \Pr{\overline{\Good}}\geq 1-\delta$, as desired.\end{IEEEproof}
\begin{remark}\label{rem:economy}Notice that the term in $\Pr{V_n>0}$ term in \lemref{freedman} is what ultimately allows us to perform the union bound in the way we do. Instead of adding a penalty term for each $w\in A^*$ (of which there are infinitely many choices), we ultimately only need to account for those $w$ that actually show up in the sample. This is a ``martingale version"~of an idea by M\'{o}rvai and Weiss \cite{MorvaiWeiss2008}, who consider countable alphabet processes with finite contexts. Similar inequalities have appeared in \cite{LeonardiGarivier2011,BelloniOliveira2011}.

A more standard alternative to this idea would be to set $\conf(w,a)\approx \sqrt{(t(w)+\log(\log n/\delta))/N_{n-1}(w)}$ for deterministic values $t(w)$ with $\sum_{w\in A^*}e^{-t(w)}\leq 1$. The usual way to achieve this is to take $t(w)$ linear in $|w|$; see e.g. \cite{Csiszar2002}. This would allow for a union bound without the $\Pr{V_n>0}$ term, but the near-minimax result in \thmref{minimax} would not follow, as the corresponding $\conf(w,a)$ would be too large for ``deep pasts".\end{remark}

\section{A typicality result for renewal processes}\label{sec:typicality}

We recall the definition of the class $\sR(\Gamma,\gamma)$ in \secref{minimax}. 
\begin{lemma}\label{lem:typical}Let $\bX=(X_n)_{n\in\Z}\in\sR(\Gamma,\gamma)$ with arrival distribution $\mu$. Then there exist a $C,C_0,C_1>0$ depending only on $\gamma,\Gamma$ such that the following holds. Let $k_*$ as in \eqnref{defk*}. 
\begin{equation}\label{eq:defk*2}k_*:=\max\left\{k\in\N\,:\,\mu_{\geq }(k)\geq \frac{C_0\log n}{n}\right\},\end{equation}
For any $n\in\N$, $n\geq 3$, the event
\begin{equation}\label{eq:defT}F:=\left\{\forall 1\leq k\leq k_*\,:\,N_{n-1}(10^{k-1}) > \,\frac{\mu_{\geq }(k)}{C_1}\right\}\end{equation} 
has probability $\Pr{F}\geq 1-C\,n^{-{1-\gamma/2}}$,\end{lemma}
\begin{IEEEproof}We use the notation introduced in \defref{renewal}, as well as the notation and conventions for constants $C,C_0,C_1,C_2\dots$ (depending only on $\Gamma,\gamma$) which we introduced in the proof of \thmref{minimax}.  
Recall from that section that
\[\{\tau_0\leq \tau_1\leq \tau_2\leq \dots \}=\{n\in\N\,:\, X_n=0\}\]
where $\{\tau_0\}\cup \{\tau_i-\tau_{i-1}\}_{i\in \Z_+}$ are independent, with 
\[\Pr{\tau_0=k}=\frac{\mu_{\geq}(k)}{\sum_{j\geq 0}\mu_{\geq j}(0)}\leq \mu_{\geq }(k)\,\,(k\in\Z_+)\]
and 
\[\Pr{\tau_i-\tau_{i-1}=k} = \mu(k)\,\,\,(k\in\Z_+).\]
Our first step in the proof will be to note that that there exist $C_3,C_4>0$ such that, for all $n\in\Z_+$:
\begin{equation}\label{eq:step1typical}\mbox{\bf (Step 1)\:}\Pr{\tau_{\lfloor n/C_3\rfloor}\leq n}\geq 1- C_4\,n^{-1-\frac{\gamma}{2}}.\end{equation}
Indeed, 
\begin{IEEEeqnarray}{rCl}\IEEEnonumber \Pr{\tau_{\lfloor n/C_3\rfloor}>n}&\leq& \Pr{\tau_{\lfloor n/C_3\rfloor}-\tau_0>n/2} \\ \label{eq:decomposestep1} & & \:+\, \Pr{\tau_0>n/2}.\end{IEEEeqnarray}
The first term in the RHS is a tail bound for an i.i.d. sum 
\[\sum_{i=1}^{\lfloor n/C_3\rfloor}(\tau_{i}-\tau_{i-1})\]
that has expectation 
\[\sum_{i=1}^{\lfloor n/C_3\rfloor}\Ex{\tau_{i}-\tau_{i-1}}\leq \left\lfloor \frac{n}{C_3}\right\rfloor \Ex{(\tau_1-\tau_0)^{\gamma}}^{1/\gamma}\leq \frac{n\,\Gamma^{1/\gamma}}{C_3} .\]
Therefore, if $C_3$ is large enough, the first term in the RHS of \eqnref{decomposestep1} is bounded by
\[\Pr{\sum_{i=1}^{\lfloor n/C_3\rfloor}(\tau_{i}-\tau_{i-1}-\Ex{\tau_i-\tau_{i-1}})>n/4},\]
and standard moment inequalities for i.i.d. sums give
\begin{equation}\label{eq:wantstep1}\Pr{\tau_{\lfloor n/C_3\rfloor}-\tau_0>n/2}\leq \frac{C_4}{n^{1+\gamma/2}}.\end{equation}For the other term in the RHS of \eqnref{decomposestep1}, we note the exact formula
\[\Pr{\tau_0>n/2}=\sum_{k>n/2}\mu_{\geq }(k)\]
and bound the terms in the sum individually: \begin{equation}\label{eq:gototail}\mu_{\geq }(k) = \sum_{j\geq k}\mu(j)\leq \frac{\sum_{j\geq k}j^{2+\gamma}\,\mu(j)}{k^{2+\gamma}} \leq \frac{\Gamma}{k^{2+\gamma}}.\end{equation}
So $\Pr{\tau_0>n/2}\leq C_4/n^{1+\gamma/2}$ as well. Combining this bound with \eqnref{wantstep1} and plugging them into \eqnref{decomposestep1} gives \eqnref{step1typical}.
Now take $k\in\Z_+$, $k\leq k_*$. Note that
\[\sum_{i=1}^{\lceil n/C_3\rceil}\Ind{\tau_i-\tau_{i-1}\geq k}\]
is a sum of i.i.d. Bernoulli terms with $\Pr{\tau_i-\tau_{i-1}\geq k} = \mu_{\geq }(k)\geq C_0\ln n/n$. The upshot is that, given any $\eta=\eta(\Gamma,\gamma)>0$, one may choose $C_0=C_0(\eta)$ appropriately so as to obtain the next inequality via a Chernoff bound for the binomial distribution \cite{ConcentrationIT}:
\[\Pr{\sum_{i=1}^{\lceil n/C_3\rceil}\Ind{\tau_i-\tau_{i-1}\geq k}<C_5\,\mu_{\geq}(k)\,{n}}<\frac{C_6}{n^{\eta}}.\]
Now \eqnref{gototail} implies $k_*\leq n^{1/(2+\gamma)}/C_7$. Hence by choosing $\eta$ large enough, we can take an union bound over $k\leq k_*$ and obtain that
\[\sum_{i=1}^{\lceil n/C_3\rceil}\Ind{\tau_i-\tau_{i-1}\geq k}\geq C_5\,\mu_{\geq}(k)\,n\mbox{ for all $k\leq k_*$}\]
with probability at least $1-Cn^{-1-\gamma/2}$.The proof finishes once we note that, when \eqnref{step1typical} and the event in the last display both hold, then $F$ must also hold: each time $\tau_i\leq n-1$ with $\tau_{i}-\tau_{i-1}\geq k$ gives a distinct occurrence of $10^{k-1}$ in $X_1^{n-1}$. This means that $\Pr{F}\geq 1- C\,n^{-1-\frac{\gamma}{2}}$, as desired.\end{IEEEproof}

%using array.sty, it might be a good idea to tweak the value of
% \extrarowheight as needed to properly center the text within the cells
%\caption{An Example of a Table}
%\label{table_example}
%\centering
%% Some packages, such as MDW tools, offer better commands for making tables
%% than the plain LaTeX2e tabular which is used here.
%\begin{tabular}{|c||c|}
%\hline
%One & Two\\
%\hline
%Three & Four\\
%\hline
%\end{tabular}
%\end{table}

% Note that IEEE does not put floats in the very first column - or typically
% anywhere on the first page for that matter. Also, in-text middle ("here")
% positioning is not used. Most IEEE journals use top floats exclusively.
% Note that, LaTeX2e, unlike IEEE journals, places footnotes above bottom
% floats. This can be corrected via the \fnbelowfloat command of the
% stfloats package.
% use section* for acknowledgement
\section*{Acknowledgment}
We thank Antonio Galves (USP), Alexandre Belloni (Duke) and Sandro Gallo (USP) for many useful discussions. We also thank the anonymous referees for their careful reading of our submission.

% Can use something like this to put references on a page
% by themselves when using endfloat and the captionsoff option.
\ifCLASSOPTIONcaptionsoff
  \newpage
\fi

% trigger a \newpage just before the given reference
% number - used to balance the columns on the last page
% adjust value as needed - may need to be readjusted if
% the document is modified later
%\IEEEtriggeratref{8}
% The "triggered" command can be changed if desired:
%\IEEEtriggercmd{\enlargethispage{-5in}}

% references section

% can use a bibliography generated by BibTeX as a .bbl file
% BibTeX documentation can be easily obtained at:
% http://www.ctan.org/tex-archive/biblio/bibtex/contrib/doc/
% The IEEEtran BibTeX style support page is at:
% http://www.michaelshell.org/tex/ieeetran/bibtex/
\bibliographystyle{IEEEtran}
% Generated by IEEEtran.bst, version: 1.13 (2008/09/30)

\vfill
\begin{IEEEbiographynophoto}{Roberto Imbuzeiro Oliveira} was born in Rio de Janeiro, Brazil, in 1977. He received a B.Sc. degree from the {\em Pontif\'{\i}cia Universidade Cat\'{o}lica do Rio de Janeiro} (PUC-Rio) in 1999, a M.Sc. degree from {\em Instituto de Matem\'{a}tica Pura e Aplicada} (IMPA) in 2000, and a Ph.D. degree from the Courant Institute at New York University in 2004, all in mathematics. 

From 2004 to 2006 he was a Postdoctoral Researcher with the Physics of Information group at IBM TJ Watson Research Center.  In 2006 he joined the faculty of IMPA, Rio de Janeiro, Brazil, where he is currently an Associate Professor and Head for Graduate Studies. His main research interests are in probabilistic and combinatorial aspects of high dimensional phenomena, including concentration inequalities, random graph models, quantitative bounds for Markov chains and (more recently) statistical problems.

Dr. Oliveira has held a {\em Bolsa de Produtividade em Pesquisa} from CNPq, Brazil since 2007.  His undergraduate studies were supported by a {\em Bolsa por Desempenho Acad\^{e}mico} (full-tuition merit-based scholarship) from PUC-Rio. His graduate work was supported by a fellowship from CNPq, Brazil.\end{IEEEbiographynophoto}
\vfill
% insert where needed to balance the two columns on the last page with
% biographies
%\newpage

% You can push biographies down or up by placing
% a \vfill before or after them. The appropriate
% use of \vfill depends on what kind of text is
% on the last page and whether or not the columns
% are being equalized.

%\vfill

% Can be used to pull up biographies so that the bottom of the last one
% is flush with the other column.
%\enlargethispage{-5in}

% that's all folks
\end{document}